\newcounter{braid}
\newcounter{strands}
\DeclareMathAlphabet{\bsf}{OT1}{cmss}{bx}{n}
\def\cross{%
  \@ifnextchar^{\message{Got sup}\cross@sup}{\cross@sub}}
\def\cross@sup^#1_#2{\render@cross{#2}{#1}}
\def\cross@sub_#1{\@ifnextchar^{\cross@@sub{#1}}{\render@cross{#1}{1}}}
\def\cross@@sub#1^#2{\render@cross{#1}{#2}}
\def\render@cross#1#2{
  \def\strand{#1}
  \def\crossing{#2}
  \pgfmathsetmacro{\cross@y}{-\value{braid}*\braid@h}
  \pgfmathtruncatemacro{\nextstrand}{#1+1}
  \foreach \thread in {1,...,\value{strands}}
  {
    \pgfmathsetmacro{\strand@x}{\thread * \braid@w}
    \ifnum\thread=\strand
    \pgfmathsetmacro{\over@x}{\strand * \braid@w + .5*(1 - \crossing) * \braid@w}
    \pgfmathsetmacro{\under@x}{\strand * \braid@w + .5*(1 + \crossing) * \braid@w}
    \draw[braid] \pgfkeysvalueof{/tikz/braid start} +(\under@x pt,\cross@y pt) to[out=-90,in=90] +(\over@x pt,\cross@y pt -\braid@h);
    \draw[braid] \pgfkeysvalueof{/tikz/braid start} +(\over@x pt,\cross@y pt) to[out=-90,in=90] +(\under@x pt,\cross@y pt -\braid@h);
    \else
    \ifnum\thread=\nextstrand
    \else
     \draw[braid] \pgfkeysvalueof{/tikz/braid start} ++(\strand@x pt,\cross@y pt) -- ++(0,-\braid@h);
    \fi
   \fi
  }
  \stepcounter{braid}
}
\tikzset{braid/.style={double=\pgfkeysvalueof{/tikz/braid colour},double distance=1pt,line width=2pt,white}}
\newcommand{\braid}[2][]{%
  \begingroup
  \pgfkeys{/tikz/strands=2}
  \tikzset{#1}
  \pgfkeysgetvalue{/tikz/braid width}{\braid@w}
  \pgfkeysgetvalue{/tikz/braid height}{\braid@h}
  \setcounter{braid}{0}
  \let\sigma=\cross
  #2
  \endgroup
}
\newtheorem{theorem}{Theorem}
\newtheorem{proposition}[theorem]{Proposition}
\newtheorem{lemma}[theorem]{Lemma}
\newtheorem{corollary}{Corollary}
\newtheorem{definition}[theorem]{Definition}
\newtheorem{claim}{Claim}
\def\Z{\mathbb{Z}}
\def\C{\mathbb{C}}
\def\Q{\mathbb{Q}}
\def\R{\mathbb{R}}
\def\C{\mathbb{C}}
\def\N{\mathbb{N}}
\def\F{\mathbb{F}}
\def\md{\mathcal{D}}
\def\Zpk{\mathbb{Z}/p^{k}}
\def\Zpk1{\mathbb{Z}/p^{k-1}}
\newcommand{\rref}[1]{(\ref{#1})}
\newcommand{\beg}[2]{\begin{equation}\label{#1}#2\end{equation}}
\def\r{\rightarrow}
\def\F{\mathbb{F}}
\def\sl2{\widetilde{SL_{2}(\Z)}}
\def\md
\def\rank{\operatorname{rank}}
\title[]{Howe duality over finite fields I: The two stable ranges}
\author{Sophie Kriz}
\thanks{The author was supported by a 2023 National Science Foundation
Graduate Research Fellowship, no. 2023350430}
\begin{document}

\maketitle
\vspace{-5mm}
\begin{abstract}

This is the first in a series of papers on type I Howe duality for finite fields, concerning the restriction of an oscillator representation of the symplectic group to a product of a symplectic and an
orthogonal group. The goal of the series is describing this restriction completely
explicitly.
Applications (described in the third paper of the series) include demonstrating that
the tensor pairs previously calculated by S.-Y. Pan as occuring with non-zero multiplicity
occur with multiplicity 1, proving the type C case of the Gurevich-Howe rank conjecture,
and giving a recursive formula for the characters of cuspidal unipotent representations.
In this first paper,
we construct the correspondence in the two so called stable ranges, where the rank of one of the factors is large enough with respect to the other.

\end{abstract}

\vspace{-3mm}

\tableofcontents

\vspace{-10mm}

\section{Introduction}

This is the first paper in a series dedicated to the problem of Howe correspondence for finite fields.
The question was first introduced by R. Howe in \cite{HoweFiniteFields} 
and was discussed in the papers \cite{HoweGurevich, HoweGurevichBook}
by S. Gurevich and Howe, bringing
to light the finite field case of the Howe duality conjecture \cite{HoweTheta} for locally compact fields.
In rough terms, the correspondence concerns the decomposition of the
restriction of the oscillator representation of a finite symplectic group to a product of a symplectic and an orthogonal group which are each other's centralizers. 
More broadly, this can be discussed for general reductive dual pairs, consisting of any pair of subgroups
which are each other's centralizers in a finite symplectic group.

Early progress on this program for the case of finite fields 
was made by J. Adams and A. Moy \cite{AdamsMoy},
who described the behavior of unipotent cuspidal representations under the Howe correspondence
between symplectic and even orthogonal groups,
using previous results of S. Kudla \cite{Kudla} for the theta correspondence on local fields,
which used the Jacquet functor. Another important
advance was the paper \cite{AubertMichelRouquier} by A.-M. Aubert, J. Michel, and R. Rouquier,
who conjectured the behavior of the Howe correspondence on general
{\em unipotent representations} (i.e. those appearing in the Deligne-Lusztig induction of
a trivial character), and proved their conjecture for unitary and general linear groups
(i.e. the ``type II case" of Howe duality). Their conjecture for {\em type I dual pairs},
i.e. symplectic and orthogonal groups, was proved by S.-Y. Pan \cite{Pan1} using the
concept of a {\em uniform projection} (a character is called uniform if it is a linear
combination of virtual Deligne-Lusztig characters).
This geometric approach was also used 
by D. Liu and Z. Wang in \cite{LiuWang} to extend the results of Adams and Moy
to the case of reductive dual pairs involving odd orthogonal groups.
Pan \cite{Pan2} went
further and, by proving suitable compatibility results, was able to determine which pairs of irreducible
representations occur in the type I Howe duality correspondence with non-zero multiplicity.

In the present series of papers, we approach this problem using
a different method. We study the Howe duality
correspondence directly and explicitly by examining the endomorphism algebras of
tensor products of oscillator representations over finite fields. In these terms, we describe
explicitly the so-called {\em $\eta$-correspondence} defined by Gurevich and Howe \cite{HoweGurevich, HoweGurevichBook}, and also define and similarly describe a complementary
``$\zeta$-correspondence." Having a hands-on description of the irreducible representations
occuring in the Howe duality correspondence gives us more precise information. For example,
as a payoff, we are able to explicitly demonstrate that the pairs identified by Pan \cite{Pan2}
always occur with multiplicity $1$. Using the organization
of the oscillator representation's decomposition
in terms of the eta and zeta correspondences, we obtain a recursive formula for the characters
of the unipotent cuspidal representations, which were still quite mysterious up to this point.
We also prove the type C case of the Gurevich-Howe rank conjecture 
(Conjecture 0.3.8 of \cite{HoweGurevichBook}),
the type A case of which was proved R. M. Guralnick, M. Larsen, and P. H. Tiep \cite{PiGuralLarsTiep},
and the type B and D cases of which were proved by Larsen and Tiep in \cite{LarsenTiep}.
These applications will be discussed in the third paper of the series.



The present first paper of the series constructs the correspondence in two so called stable ranges, in which all irreducible representations of either the symplectic or the orthogonal group occur. We precisely construct the summands. (In the symplectic stable range, the summands were previously described by
F. Montealegre-Mora and D. Gross \cite{QuantumCodes}
in a less explicit way.) In the next paper, we will make these
summand explicit using the Lusztig classification of representations of
finite groups of Lie type, (extending the results of S.-Y. Pan
\cite{Pan1, Pan2}). In the last paper of the series,
we extend that descriptions beyond the stable ranges, to cover all cases of the Howe correspondence.

\vspace{3mm}

For a symplectic vector space $\mathbf{V}$ over a field $k$, given a non-trivial additive character
$$k \rightarrow \C ,$$
one may consider the associated {\em oscillator} or {\em Weil-Shale} representation $\omega$
of the symplectic group $\text{Sp}(\mathbf{V})$ (when, for example, $k = \C$ or $\F_q$), or the
metaplectic group $Mp (\mathbf{V})$ (when $k = \R$ or $\Q_p$). 
The question of Howe duality asks about the decomposition of the restriction
of $\omega$ to the product of a reductive dual pair of subgroups of $\text{Sp} (\mathbf{V})$
or $Mp (\mathbf{V})$.
Denote this reductive dual pair by $G, H$. Consider the decomposition
\beg{GenRedDualPairRes}{\text{Res}_{G \times H} (\omega) = \bigoplus_{\rho \in \widehat{G} , \pi \in \widehat{H}}
\mu( \rho, \pi) \cdot (\rho \otimes \pi) }
for some multiplicities
$$\mu : \widehat{G} \times \widehat{H} \rightarrow \N_0.$$
(In this paper, for a group $G$, $\widehat{G}$ denotes
the set of irreducible unitary $G$-representations.)

For fields $k = \C, \R,$ or $\Q_p$, there in fact exist subcollections
$\mathcal{S}_{G}$ and $\mathcal{S}_{H}$ of the irreducible representations
of $G$ and $H$, and
a bijective correspondence (called the {\em theta correspondence})
$$\theta : \mathcal{S}_{G} \rightarrow \mathcal{S}_{H},$$
such that
\beg{ThetaCorrStat}{\begin{array}{c}
\text{Res}_{G\times H} (\omega) =\\[1ex]
\displaystyle
 \int_{\rho \in \mathcal{S}_{G}} \rho \otimes \theta (\rho) = \int_{\pi \in \mathcal{S}_{H}} \theta^{-1} (\pi ) \otimes \pi.
\end{array}}
This result, referred to as {\em Howe duality},
is the culmination of a long history of work, (see, for example,
R. Howe, \cite{HoweTheta} for the case of $k = \R$ and W.T. Gan, S. Takeda, \cite{GanTakeda}
for $k = \Q_p$), and has many deep applications
in arithmetic geometry, number theory, and representation theory.
It is an interesting and difficult question to
find its appropriate analogue in the case when $k$ is a finite field.
This case, where $k = \F_q$ (for $q$ a power of an odd prime),
is the main topic of the present note.

\vspace{5mm}

Due to the large isotropic subspaces of symmetric bilinear forms over
a finite field, a bijective correspondence $\theta$ giving a decomposition \rref{ThetaCorrStat} is
not the correct formalism. Still, one can study the decomposition of
$\text{Res}_{\text{Sp} (V) \times \text{O}(W, B)} (\omega [V\otimes W])$, which exhibits interesting
relationships between irreducible representations of $\text{Sp}(V)$ and $\text{O}(W,B)$,
and many meaningful patterns regarding which pairs $\rho \otimes \pi$
appear in the decomposition \rref{GenRedDualPairRes} have been 
constructed and studied, see for example
\cite{AubertKP, AubertMichelRouquier, Chavez, HoweGurevich, HoweGurevichBook, Pan1, Pan2}.

\vspace{5mm}

As in the context of an infinite locally compact field,
over a finite field, we may still choose to either
consider the decomposition of the restricted oscillator representation to be indexed 
by the irreducible representations of $\text{Sp}(V)$
\beg{Spenumeration}{\text{Res}_{\text{Sp}(V) \times \text{O}(W,B)} (\omega[W \otimes V]) = \bigoplus_{\rho \in \widehat{\text{Sp} (V)}} \rho \otimes \Psi (\rho) }
or by the irreducible representations
of $\text{O}(W)$
\beg{Oenumeration}{\text{Res}_{\text{Sp}(V) \times \text{O}(W,B)}(\omega [ W \otimes V] ) = \bigoplus_{\rho \in \widehat{\text{O}(W)}}  \Phi(\rho)\otimes \rho,}
for some (not necessarily irreducible or non-zero) $\text{Sp}(V)$- and $\text{O}(W,B)$-representations
$\Phi (\rho )$ and $\Psi (\rho)$.
We investigate \rref{Spenumeration} or \rref{Oenumeration} by examining the
$\text{Sp}(V)$- or $\text{O}(W,B)$-equivariant endomorphism algebras
$$\text{End}_{\text{Sp}(V)} (\omega [V\otimes W]), \hspace{5mm} \text{End}_{\text{O}(W,B)} (\omega [V\otimes W]),$$
respectively (note that in this notation, the choice of $B$ is assumed
to specify the restriction of $\omega [V\otimes W]$ to $\text{Sp}(V)$,
but is hidden in the notation).
It intuitively makes sense to consider \rref{Spenumeration} (resp. \rref{Oenumeration})
if in the reductive dual
pair $(\text{Sp}(V), \text{O}(W,B))$ the symplectic group is ``much larger" (resp. ``much smaller")
than the orthogonal group.

\vspace{5mm}

The case of reductive dual pairs $(\text{Sp}(V), \text{O}(W,B))$
where $V$ has a large enough dimension compared to $W$ (specifically, where $dim (V) \geq 2 dim (W)$),
which we will refer to as the {\em symplectic stable case}, has been especially studied.
S. Gurevich
and R. Howe \cite{HoweGurevich} proved that in this case, every irreducible representation $\rho$ of $\text{O}(W,B)$
appears in the restriction of $\text{Res}_{\text{Sp}(V) \times \text{O}(W,B)} (\omega [ V\otimes W])$ further to $\text{O}(W,B)$,
and therefore, there exists a non-zero representation of $\text{Sp}(V)$ whose tensor product with $\rho$ is a
summand of $\text{Res}_{\text{Sp}(V) \times \text{O}(W,B)} (\omega [ V\otimes W])$. Further, there is
a unique ``top" irreducible piece of this representation giving the {\em eta correspondence}
$\eta (\rho)$,
which was constructed by S. Gurevich and R. Howe to specifically have highest {\em $U$-rank}
(see \cite{HoweGurevich, HoweGurevichBook} for more details).
Here, we write
$$\eta^V_{W,B} : \widehat{\text{O}(W,B)} \hookrightarrow \widehat{\text{Sp}(V)}$$
for the eta correspondence
(we omit the subscript and superscript when they are clear).

\vspace{5mm}

Examining the structure of the endomorphism algebra of oscillator representations
(and their tensor powers) can be used to investigate $\eta^V_{W,B}$
and derive an explicit decomposition of the restricted oscillator representation in terms of it.
In fact, from this perspective, the roles of the symplectic and orthogonal groups can be switched.
In this paper, we find that for an opposite {\em orthogonal stable range} (where
$dim (V)$ is less than or equal to the dimension of the maximal isotropic subspace
of $W$), there exists a system of injections
$$\zeta^{W,B}_V: \widehat{\text{Sp}(V)} \hookrightarrow \widehat{\text{O}(W,B)}  $$
we call the {\em zeta correspondence}, along with an accompanying explicit decomposition
of the restricted oscillator representations. Like $\eta^V_{W,B}$ in the symplectic
stable range, $\zeta^{W,B}_V$ all have disjoint images in the orthogonal stable range,
each giving a new range of $O(W,B)$-representations.
To state this concretely, we introduce some notation.

\vspace{3mm}

Consider an $n$-dimensional orthogonal space and form
bilinear form $(W,B)$, and a $2N$-dimensional symplectic space and form $(V,S)$. 
The maximal dimension of an isotropic subspace of $V$ with respect to $S$ is $N$. 
We denote by $h_W$ the maximum dimension of
an isotropic subspace of $W$ with respect to $B$. Over a finite field, in the case
when $n= 2m $ is even, we either have
$h_W = m-1$ or $m$
(giving rise to the two orthogonal groups $O^+_{2m} (\F_q)$ and $O^-_{2m} (\F_q)$, respectively).
In the case when $n= 2m+1$ is odd, then we must have $h_W= m$.
Then $B$ can be expressed as a direct sum of $h_W$ copies of the 2-dimensional
hyperbolic symmetric bilinear form $\begin{pmatrix} 0 & 1\\ 1& 0 \end{pmatrix}$,
and a final anisotropic part of dimension $0$, $1$, or $2$.
For $k$ the dimension of an isotropic space $Z$ in $V$ (resp. $W$), we denote by
$(V[-k], S[-k])$ (resp. $(W[-k], B[-k])$) the subspace of dimension
$2N-2k$ (resp. $n-2k$) and its accompanying
non-degenerate symplectic (resp. symmetric) form, which are obtained by projecting away from
$Z$ and its dual in $V$ with respect to $S$ (resp. $B$).
Let us also denote by $P^V_{ k}$ (resp. $P^B_{ k}$) the parabolic subgroups
of $\text{Sp} (V)$ (resp. $\text{O}(W, B)$) corresponding to an $k$-dimensional
isotropic subspace of $V$ (resp. $W$) with Levi subgroup
$$\text{GL}_{k} (\F_q) \times \text{Sp}_{2N-2k} (\F_q)$$
(resp.
$$\text{GL}_k (\F_q) \times \text{O}_{n-2k} (\F_q)).$$
We shall consider the parabolic induction of a representation of
the Levi subgroup by inflating it to a representation of the parabolic by
taking unipotent elements to act trivially, and then inducing to a representation of
the full symplectic or orthogonal group. 

\vspace{2mm}

\noindent {\bf Convention:} 
In this series of papers, we shall adhere to the standard convention in this topic, which is, for a subgroup
$H \subseteq G$, to write $\text{Ind}^G_H$ for induction from $H$ to $G$ and $\text{Res}^G_H$ for restriction from $G$ to $H$.
If one of the variables is understood from the context, we sometimes omit it.

\vspace{2mm}

The stable ranges are characterized by the smaller of the symplectic and orthogonal spaces
being of a lesser or equal dimension to the maximal dimension of an isotropic subspace of the other
space and form:
\begin{definition}
Consider a reductive dual pair $(\text{Sp}(V), \text{O}(W,B))$ of subgroups of $\text{Sp}(V \otimes W)$. Assume the previous paragraph's notation. 
\begin{enumerate}
\item We say $(\text{Sp}(V), \text{O}(W,B))$ is in the {\em symplectic stable range} if
$n \leq N$.

\vspace{3mm}

\item We say $(\text{Sp}(V), \text{O}(W,B))$ is in the {\em orthogonal stable range} if
$2N \leq h_W$.

\end{enumerate}
\end{definition}

\vspace{3mm}

\noindent The purpose of this paper is then to prove the following

\begin{theorem}\label{TheoremGenuine}
There are mirroring results in the two stable ranges of reductive dual pairs:
\begin{enumerate}
\item \label{SymTheoremGenuine} For $(\text{Sp}(V), \text{O}(W,B))$ in the symplectic stable range
(which we recall means that $\text{dim} (W) \leq \text{dim} (V)/2$), we find
\beg{SympStableEndDecompThm}{\text{End}_{\text{Sp}(V)} (\omega [ V\otimes W]) = \bigoplus_{k=0}^{h_W} M_{|\text{O}(W,B)/P_k^B|}
(\C \text{O} (W[-k], B[-k]))}
giving an explicit isomorphism of $\text{Res}_{\text{Sp}(V) \times \text{O}(W,B)} (\omega[ V\otimes W])$ with 
\beg{EtaCorrThmDecomp}{
\bigoplus_{k=0}^{h_W}
\bigoplus_{\rho \in \widehat{\text{O}(W[-k], B[-k])}} \eta^V (\rho) \otimes \text{Ind}_{P_k^B} (\rho \otimes \epsilon (\text{det}))
}
where $\epsilon (\text{det})$ is considered as a representation of the $\text{GL}_k (\F_q)$ factor
of the Levi factor of $P_k^B$ in each term.

\vspace{5mm}

\item \label{OrthoTheoremGenuine} 
For $(\text{Sp}(V), \text{O}(W,B))$ in the orthogonal stable range
(which we recall means that $\text{dim} (V)$ is less than or equal to the
maximal dimension of a $B$-isotropic subspace of $W$), we find
\beg{OrthoStableEndDecompThm}{\text{End}_{\text{O}(W,B)} (\omega [ V\otimes W]) \cong \bigoplus_{k=0}^N M_{|\text{Sp}(V)/ P_k^V|} (\C \text{Sp}(V [-k]))}
In fact, there exists a system of injections
$$\zeta^{(W,B)}_V: \widehat{\text{Sp}(V)} \hookrightarrow \widehat{\text{O}(W,B)}$$
with disjoint images such that $\text{Res}_{\text{Sp} (V) \times \text{O}(W,B)} (\omega [ V\otimes W])$ decomposes as
\beg{ZetaCorrThmDecomp}{ \bigoplus_{k=0}^N
\bigoplus_{\rho \in \widehat{\text{Sp}(V[-k])}} \text{Ind}_{P_k^V} (\rho \otimes \epsilon (\text{det}))
\otimes \zeta^{(W,B)} (\rho),}
where $\epsilon (\text{det})$ is considered as a representation of the $\text{GL}_k (\F_q)$ factor
of the Levi factor of $P_k^V$ in each term.

\end{enumerate}
\end{theorem}
\vspace{3mm}

\noindent{\bf Comment:}
The existence of the isomorphism \rref{EtaCorrThmDecomp} was established
from the point of
view of mathematical physics by F. Montealegre-Mora
and D. Gross \cite{QuantumCodes}.
In this paper, we follow a different approach
which can be applied to both the symplectic and orthogonal stable ranges, and
which gives
an explicit combinatorial formula for the decomposition.

\vspace{3mm}

In the stable ranges, we can use these results to find an explicit description of $\eta^V_{W,B}$
and $\zeta^{W,B}_V$ in terms of Lusztig's classification of irreducible representations of finite groups
of Lie type, using information about the possible dimensions of irreducible representations of $\text{Sp}(V)$,
$O(W,B)$. This is done in \cite{KrizLusztigStablSymp},
(extending the results of
\cite{Pan1, Pan2}).

These results obtain explicit decompositions of $\text{Res}_{\text{Sp}(V)\times \text{O}(W,B)} (\omega [ V\otimes W])$
in the symplectic and orthogonal stable ranges, covering about half of the general cases of
type I reductive dual pairs $(\text{Sp}(V), \text{O}(W,B))$. One may ask if Theorem
\ref{TheoremGenuine} and the following results in \cite{KrizLusztigStablSymp} can be extended into the unstable
ranges, and specifically the cases up to and including the ``middle" where the ranks of $\text{Sp}(V)$ and $\text{O}(W,B)$ are equal or close. This is possible using {\em interpolated category theory},
and this result is concluded in \cite{KrizLusztigExtended}
(completing a full description of $\text{Res}_{\text{Sp}(V) \times \text{O}(W,B)} (\omega[V\otimes W])$ for a general case of $(\text{Sp}(V), \text{O}(W,B))$).

\vspace{5mm}

The present paper is organized as follows: In Section \ref{BasicEndSection},
we discuss the endomorphism algebras of
$\omega[ V\otimes W]$ over $\text{Sp}(V)$ and $\text{O}(W,B)$.
We interpret endomorphisms fixed points in $\C V \otimes W$ and introduce an operation
$\star$ corresponding to $\circ$. Using this description, we can elementarily calculate
the dimensions
of $\text{End}_{\text{Sp}(V)} (\omega [V\otimes W])$ and $\text{End}_{\text{O}(W,B)} (\omega [ V\otimes W])$
and find elements 
corresponding to generators of the group algebras of orthogonal and symplectic groups,
respectively.
In Section \ref{CombinSection}, we prove a combinatorial identity verifying that the dimensions of both sides
of \rref{SympStableEndDecompThm}, \rref{OrthoStableEndDecompThm} match,
and therefore they are equal.
In Section \ref{ProofSection}, we use an inductive argument and information about $Hom$-spaces
between oscillator representations (and their tensor powers) to conclude Theorem \ref{TheoremGenuine}.

\vspace{5mm}

\noindent {\bf Acknowledgment:} The author is thankful to Pierre Deligne, Shamgar Gurevich, Roger Howe, 
Peter Sarnak, Pham Huu Tiep, and
Jialiang Zou for discussions and comments.

\vspace{5mm}

\section{The endomorphism algebra of an oscillator representation}\label{BasicEndSection}

\vspace{3mm}

In this section, we discuss the structure of the endomorphism algebra of a tensor products
of oscillator representations. In Subsection \ref{GeneralEndOscStructure}, we
express the endomorphism algebra over $G \subseteq \mathbf{V}$
of an oscillator representation $\omega [ \mathbf{V}]$
as isomorphic to the fixed point space $\C \mathbf{V}^G$ with a certain algebra operation.
In Subsection \ref{CountingOrbitsSubSect}, we use this to compute the dimension
of the endomorphism algebra of $\omega [ V\otimes W]$ over $\text{Sp}(V)$ and $\text{O}(W,B)$.
In Subsection \ref{TopSubSectSymp}, we discuss the ``top part" of the
endomorphism algebra of a restricted oscillator representation over $\text{Sp}(V)$ or $\text{O}(W,B)$,
and find the group algebra 
$\C \text{O}(W,B)$ in the top part over $\text{Sp}(V)$, in the case of $(\text{Sp}(V), \text{O}(W,B))$ in the symplectic stable range.
In Subsection \ref{TopSubSectOrtho}, we find the group algebra $\C \text{Sp}(V)$ in the top part of the endomorphism
algebra of $\omega[ V\otimes W]$ over $\text{O}(W,B)$.

\subsection{Endomorphisms and the Schr\"{o}dinger model}\label{GeneralEndOscStructure}

First, consider a general symplectic space and form $(\mathbf{V}, \mathbf{S})$.
Recall that for an oscillator representation $\omega_a [ \mathbf{V}]$, its dual
is the oscillator representation of opposite character $\omega_{-a} [ \mathbf{V}]$
(see \cite{HoweFiniteFields}). Their tensor product gives the standard representation
$\C \mathbf{V}$, where $\text{Sp} (\mathbf{V})$ acts geometrically.

Hence, for a subgroup $G \subseteq \text{Sp} (\mathbf{V})$,
the endomorphism algebra of the restriction of the
oscillator representation to $G$ is, as a vector space, 
$$\text{End}_G (\omega_a [\mathbf{V}]) \cong \text{Hom}_G (1, \omega_a [\mathbf{V}] \otimes \omega_{-a} [\mathbf{V}])
\cong (\C \mathbf{V})^G,$$
which can also be considered as the $\C$-vector space with a
basis indexed by $G$-orbits on $\mathbf{V}$.

In fact, we can
introduce an operation $\star_{\mathbf{V}}$ on $\C \mathbf{V}$ corresponding to composition
in the endomorphism algebra such that, as $\C$-algebras,
$$(\text{End}_{\text{Vect}} (\omega_a [\mathbf{V}] ), \circ) \cong (\C (\mathbf{V}), \star_{\mathbf{V}})$$
(for subgroups $G\subseteq \text{Sp} (\mathbf{V})$, the endomorphism algebra of $\omega [ \mathbf{V}]$
over $G$ is again isomorphic to the subalgebra generated by $G$-orbits on $\mathbf{V}$).
We put, for $u, v\in \mathbf{V}$,
$$ (u) \star_{\mathbf{V}} (v) = \psi_a (\frac{1}{2} \mathbf{S}(u,v)) \cdot (u + v).$$
This can be considered as an ``untwisted" variant
of the algebra operation arising from the {\em Schr\"{o}dinger model} of
the oscillator representation:

Recall that for a decomposition of a symplectic space
$\mathbf{V}$ into Lagrangians $\boldsymbol \Lambda_+ \oplus \boldsymbol \Lambda_-$,
we may identify $\omega_a [ \mathbf{V}] \cong \C \boldsymbol \Lambda_-$. The action of
an element $(v, c)$ of the Heisenberg group $\mathbb{H}$ (for $v\in \mathbf{V}$, $c\in \F_q$)
on a generator $x \in \Lambda_-$ is given by
$$(v,c) (x)  = \psi_a (c + \frac{1}{2} \mathbf{S}(v_+, x)) \cdot (v_-+x),$$
where $v= v_+ + v_-$ is the unique decomposition of a vector $v$ into $v_+ \in \boldsymbol \Lambda_+$,
$v_- \in \boldsymbol \Lambda_-$. This gives $\omega_a [\mathbf{V}]$ the structure
of a {\em Weil-Shale representation}. The action of $\text{Sp}(\mathbf{V})$ making it
an oscillator representation arises from the uniquess of the Weil-Shale representation
for each central character. We then see a natural action of an algebra $(\C \mathbf{V}, \ast_{\mathbf{V}})$
for algebra operation $\ast$ given by, for $u = u_+ + u_-, v= v_+ + v_- \in \mathbf{V}$,
$u_\pm, v_\pm \in \boldsymbol \Lambda_\pm$,
$$(u) \ast (v) = \psi_a (\mathbf{S}(u_+, v_-)) \cdot (u+v)$$
(See also Remark 7.2.6 of \cite{HoweGurevich}, identifying a vector generator $(v)$ with its indicator function.)

Applied to $x \in \boldsymbol \Lambda_-$, a vector $v \in (\C \mathbf{V}, \ast_{\mathbf{V}})$ acts by
$$(v) (x) = \psi_a (\mathbf{S}(v_+, x)) \cdot (v_- + x).$$

Now our choice of algebra $(\C \mathbf{V}, \star_{\mathbf{V}})$ is isomorphic to the algebra
$(\C \mathbf{V}, \ast_{\mathbf{V}})$ along
$$
(\C \mathbf{V}, \star_{\mathbf{V}}) \r (\C \mathbf{V}, \ast_{\mathbf{V}})$$
$$(v) \mapsto \psi_{a} (\frac{1}{2} \mathbf{S}(v_+, v_-)) \cdot (v)$$
for $v = v_+ + v_-\in \mathbf{V}$ with $v_\pm \in \boldsymbol \Lambda_\pm$.
Therefore, an element $v \in ( \C \mathbf{V}, \star)$ acts on $x \in \boldsymbol \Lambda_-$
by
\beg{TrueStarAction}{(v) (x) = \psi_a (\mathbf{S} (v_+ , x) + \frac{1}{2} \mathbf{S} (v_+, v_-))
(v_- + x)}
for $v = v_+ + v_-$ with $v_\pm \in \boldsymbol \Lambda_\pm$.

For a vector $v \in \mathbf{V}$, considering it as an endomorphism
$(v) \in \C \mathbf{V} = \text{End}_{\text{Vect}} (\omega_a [\mathbf{V}])$, its
trace is
$$tr (v) = \begin{cases}
0, \text{ for } v \neq 0\\
|\mathbf{V}| \text{ for } v=0
\end{cases}
$$
where $|\mathbf{V}|$ denotes the set order of $\mathbf{V}$.

\vspace{5mm}

Applying this to the present situation, where $\mathbf{V}= W \otimes V$, $\mathbf{S} = B \otimes S$,
we have
$(\text{End}_{\text{Vect}} (\omega [W \otimes V]), \circ) \cong (\C (W \otimes V), \star_{W \otimes V}$. 
For $u_1, u_2 \in W$, $v_1, v_2 \in V$
\beg{FullGeneralityStarVOW}{\begin{array}{c}
(v_1 \otimes w_1) \star_{V \otimes W} (v_2 \otimes w_2) = \\[1ex]
\displaystyle \psi ( \frac{S (v_1 , v_2) \cdot B (w_1, w_2)}{2})
\cdot (v_1 \otimes w_1 + v_2 \otimes w_2).
\end{array}}
When the ground space is clear, we omit the subscript in $\star$ (in this context,
it will typically be $V \otimes W$). In particular, we have that the $\text{Sp}(V)$- and $\text{O}(W,B)$-equivariant
endomorphism algebras on $\omega [V\otimes W]$ can be expressed as
\beg{}{(\text{End}_{\text{Sp}(V)} (\omega[V\otimes W]), \circ ) \cong (\C (V\otimes W)^{\text{Sp}(V)}, \star)}
\beg{}{(\text{End}_{\text{O}(W,B)} (\omega[V\otimes W]), \circ ) \cong (\C( V\otimes W)^{\text{O}(W,B)}, \star)}

\vspace{5mm}

\subsection{Counting orbits}\label{CountingOrbitsSubSect}

\vspace{5mm}

In this subsection, we calculate the dimensions
$$\begin{array}{c}
\text{dim} (\text{End}_{\text{Sp} (V)} (\omega [ V\otimes W])) = \text{dim} (\C (V \otimes W)^{\text{Sp}(V)}),\\
\text{dim} (\text{End}_{\text{O}(W,B)} (\omega[ V\otimes W]))= \text{dim} (\C (V \otimes W)^{\text{O}(W,B)})
\end{array}$$
by counting the numer of orbits of $\text{Sp}(V)$ on $V\otimes W= V^{\oplus n}$ and the number
of orbits of $\text{O}(W,B)$ on $V\otimes W = W^{\oplus 2N}$, respectively.

\begin{lemma}\label{FixedPointCountingLemma}
Consider a symplectic space $V$ of dimension $2N$ and an space with symmetric bilinear
form $(W,B)$ of dimension $n$ with maximal dimension of an isotropic subspace denote by $h_W$.
\begin{enumerate}

\item \label{FixedPointLemmaSpCase} If $(\text{Sp}(V), \text{O}(W,B))$ for a reductive dual
pair in the symplectic stable range i.e. $n \leq N$, then
\beg{SpDiagonalFixedPointCount}{\text{dim} (\C (V \otimes W)^{\text{Sp}(V)})) = 2 (q+1)\dots (q^{n-1} +1)}

\vspace{3mm}

\item \label{FixedPointLemmaOrthoCase}
If $(\text{Sp}(V), \text{O}(W,B))$ for a reductive dual pair in the orthogonal stable range
i.e. $2N \leq h_W$, then
\beg{SpDiagonalFixedPointCount}{\text{dim} (\C (V \otimes W)^{\text{O}(W,B)}) = (q+1) (q^2+1)\dots (q^{2N} +1)}

\end{enumerate}
\end{lemma}

\begin{proof}
We begin with the proof of \rref{FixedPointLemmaSpCase}. We write $S$ for
the symplectic form on $V$. For
\rref{SpDiagonalFixedPointCount}, we need to compute the number of $\text{Sp}(V)$-orbits
on $V\otimes W = V^{\oplus n}$ (with $\text{Sp}(V)$ acting diagonally).
First recall that orbits of this $\text{Sp}(V)$ action on $n$-tuples of vectors in $V$ correspond to the data of
\begin{itemize}

\item A $d \times n$ matrix $M$ over $\F_q$ in reduced row echelon form for some $0 \leq d \leq n$.

\item A choice of $d\choose 2$ scalars $a_{i < j}$ corresponding to each pair of indices
$i< j \in \{1 , \dots , d\}$.

\end{itemize}
Given this data, its corresponding $\text{Sp}(V)$-orbit is
\beg{SpOrbitCorrRREFScalars}{\begin{array}{r}
\{(v_1, \dots , v_d) \cdot M\mid v_1, \dots , v_d \in V \text{ linearly independent},\\
\text{for every } i< j \hspace{5mm} S(v_i, v_j) = a_{i< j}\}
\end{array}}
(where $(v_1,\dots , v_d) \cdot M$ is computed by treating $(v_1, \dots , v_d)$ as a $1 \times d$ matrix,
so that the result is a $1 \times n$ matrix of vectors in $V$, i.e. is an element of $V^{\oplus n}$).
Now the range condition that $n \leq N$ precisely ensures that every choice of data
$(M, a_{i< j})$ gives a non-empty orbit \rref{SpOrbitCorrRREFScalars}, since
\rref{SpOrbitCorrRREFScalars} is non-empty if and only if
$$dim (V) \geq d+ dim (ker (S|_{\langle v_1, \dots , v_d\rangle}))$$
where $S|_{\langle v_1, \dots , v_d\rangle}$ denotes the possibly degenerate symplectic form
obtained from restricting $S$ to the span of $v_1, \dots , v_d$.

Therefore, it suffices to count all choices of data $(M, a_{i< j})$. Given $M$, the number of choices
of scalars $(a_{i<j})$ is $q^{d\choose 2}$. Now the number of $d\times n$ matrices in reduced
row echelon form is well-known to be the $q$-combination number ${n \choose d}_q$, giving
\beg{HalfwaydownSpOrbitCount}{\text{dim} (\text{End}_{\text{Sp}(V)} (\omega [ V\otimes W])) = \sum_{d=0}^n {n \choose d}_q \cdot q^{d \choose 2}.}
To see the simplification to \rref{SpDiagonalFixedPointCount} elementarily, it is actually easier to naively
write down the number of reduced row echelon $d \times n$ matrices as
$${n \choose d}_q = \sum_{1 \leq \ell_1 < \dots < \ell_d \leq n} q^{dn - (\ell_1 + \dots + \ell_d)- {d \choose 2}},$$
taking $\ell_i$ to be the length of each row from the left up to an including the pivot, which leaves 
exactly $dn - (\ell_1+ \dots + \ell_d) - {d \choose 2}$ un-determined entries (the 
${d\choose 2} = 1+ 2+ \dots + (d-1)$ term arises from the entries directly above a pivot
being $0$). Using this in \rref{HalfwaydownSpOrbitCount} gives
$$\begin{array}{c}
\displaystyle \text{dim} (\text{End}_{\text{Sp}(V)} (\omega [ V\otimes W])) = \sum_{1 \leq \ell_1< \dots < \ell_d \leq n}
q^{dn - (\ell_1+ \dots + \ell_d)} =\\
\displaystyle \sum_{1\leq \ell_1< \dots < \ell_d \leq n} \prod q^{n-\ell_i} = \prod_{j=0}^{n-1} (q^j+1),
\end{array}$$
as claimed.

\vspace{3mm}

The proof of \rref{FixedPointLemmaOrthoCase} follows similarly:
As in the symplectic case, the orbits of $O(W,B)$ on $V \otimes W = W^{\oplus 2N}$ correspond
to the data of 
\begin{itemize}

\item A $d \times 2N$ matrix $M$ over $\F_q$ in reduced row echelon form for some $0 \leq d \leq 2N$.

\item A choice of ${d+1 \choose 2}= {d\choose 2} + d$
scalars $a_{i \leq j}$ corresponding to each pair of indices
$i\leq  j \in \{1 , \dots , d\}$.

\end{itemize}
The orbit corresponding to $(M, a_{i\leq j})$ is
\beg{OrthoOrbitCorrRREFScalars}{\begin{array}{r}
\{(v_1, \dots , v_d) \cdot M\mid v_1, \dots , v_d \in V \text{ linearly independent},\\
\text{for every } i\leq j \hspace{5mm} B(v_i, v_j) = a_{i\leq  j}\}
\end{array}}
(again, treating $(v_1, \dots , v_d)$ as a $1 \times d$ matrix). As before, the range condition
precisely ensures that each \rref{OrthoOrbitCorrRREFScalars} from any choice
of $(M, a_{i \leq j})$ is non-empty. (We note that the cardinality of \rref{OrthoOrbitCorrRREFScalars},
like the range condition, depends on the form of $B$ when $n$ is even.)
Therefore, by the above argument, we have
$$\text{dim} (\text{End}_{\text{O}(W,B)} (\omega [V\otimes W])) = \sum_{d=0}^{2N} {2N \choose d}_q \cdot
q^{{d\choose 2} +d },$$
which can be simplified using the naive expression for ${2N \choose d}_q$ to
$$\begin{array}{c}
\displaystyle \text{dim} (\text{End}_{\text{O}(W,B)} (\omega [ V\otimes W])) = \sum_{1 \leq \ell_1< \dots < \ell_d \leq 2N}
q^{d(2N+1) - (\ell_1+ \dots + \ell_d)}=\\
\displaystyle \sum_{1 \leq \ell_1 < \dots < \ell_d \leq 2N} \prod q^{2N+1 - \ell_i} = 
\prod_{j=1}^{2N} (q^j+1),
\end{array}$$
as claimed.

\end{proof}

\vspace{3mm}

\subsection{The ``top subalgebra" and generators of the orthogonal group}\label{TopSubSectSymp}

First note that
\beg{ResSpAsTens}{\text{Res}_{\text{Sp} (V)} (\omega [ V\otimes W]) \cong
\omega_{a_1} [ V] \otimes \dots \otimes \omega_{a_n} [V]}
where $a_1, \dots , a_n$ are the eigenvalues of $B$ and
\beg{ResOrthoAsTens}{\text{Res}_{\text{O}(W,B)} (\omega [ V\otimes W]) =
(\text{Res}_{\text{O}(W,B)} (\omega [ \F_q^2 \otimes W])^{\otimes N},}
where we consider $\omega [ \mathbb{F}_q^2 \otimes W]$ to be the oscillator representation
on $\mathbb{F}_q^2$ with the standard symlectic form 
tensored with $(W,B)$. We take its restriction along the inclusion
$$\text{O}(W, B) \hookrightarrow \text{SL}_2 (\F_q) \times O(W,B)$$
(writing $\text{SL}_2 (\F_q) = \text{Sp} (\F_q^2)$). In fact, we note that actually the restriction
\beg{PermOWB}{\text{Res}_{\text{O}(W,B)} (\omega [\F_q^2 \otimes W]) \cong \C W^-}
is isomorphic to the (twisted) permutation representation of $O(W,B)$, where $-$ indicates
the sign character of $\Z/2 = O(W,B)/SO(W,B)$,
(by considering first the restriction to $GL(W)$).

Now to approach both the symplectic and orthogonal stable ranges, our strategy will be to first consider the {\em top subalgebras}
$$\text{End}_{\text{Sp}(V)}^{top} (\omega [ V\otimes W]) \subseteq \text{End}_{\text{Sp}(V)} (\omega [ V\otimes W]),$$
$$\text{End}_{\text{O}(W,B)}^{top} (\omega [ V\otimes W]) \subseteq \text{End}_{\text{O}(W,B)} (\omega [ V\otimes W]),$$
consisting of endomorphisms of the restricted oscillator representations 
considered as the tensor products
\rref{ResSpAsTens}, \rref{ResOrthoAsTens}
which do not factor through a lower degree tensor product of factors
$\omega_a [V]$, $\text{Res}_{\text{O}(W,B)} (\omega [\F_q^2 \otimes W])$, respectively.
In the symplectic case, in the language of Gurevich and Howe,
$\text{End}_{\text{Sp}(V)}^{top} (\omega [ V\otimes W])$
is precisely the endomorphism algebra of the part of
$\text{Res}_{\text{Sp}(V)}(\omega [V\otimes W])$ with highest $\otimes$-rank.

\vspace{3mm}

The purpose of the next two subsections is to prove the following
\begin{proposition}\label{GroupInTop}
Suppose $V$ is a symplectic space of dimension $2N$ and $W$ is an $n$-dimensional space
with symmetric bilinear form $B$. Write $h_W$ for the maximal dimension of an isotropic subspace
of $W$ with respect to $B$.
\begin{enumerate}
\item\label{GroupInTopSymp} If $(\text{Sp}(V), \text{O}(W,B))$ is in the symplectic stable range, then
\beg{GeometricSubAlgOWB}{\C \text{O} (W,B) \subseteq \text{End}_{\text{Sp}(V)}^{top} (\omega [ V\otimes W])}

\vspace{3mm}

\item \label{GroupInTopOrtho}
If $(\text{Sp}(V), \text{O}(W,B))$ is in the orthogonal stable range, then
\beg{GeometricSubAlgSpV}{\C \text{Sp}(V) \subseteq \text{End}_{\text{O}(W,B)} (\omega [ V\otimes W]),}
\end{enumerate}
In fact, in both cases, this group algebra will act on $\omega [ V\otimes W]$
precisely according to the representation action
as a subgroup of $\text{Sp}(V)\times \text{O}(W,B) \subseteq \text{Sp}(V \otimes W)$.
\end{proposition}

In this subsection, we will prove part \rref{GroupInTopSymp} of this proposition, for
$(\text{Sp}(V), \text{O}(W,B))$ in the symplectic stable range. The case is simpler since the orthogonal group
has a simpler set of generators that the symplectic group. 
Due to the complexity of the generators of $\text{Sp}(V)$ and their corresponding elements in
$\text{End}_{\text{O}(W,B)} (\omega [ V\otimes W])$ as the fixed point space $(\C V\otimes W)^{\text{O}(W,B)}$, 
we will treat part \rref{GroupInTopOrtho} separately
in Subsection \ref{TopSubSectOrtho} below.

\vspace{3mm}

\begin{proof}[Proof of Proposition \ref{GroupInTop}, part \rref{GroupInTopSymp}]
In the orthogonal group $O(W,B)$, consider the set of generators consisting
of 
Our approach will be to find generators corresponding to

Let us consider, for $\lambda = (\lambda_1, \dots , \lambda_n)$
a $1 \times n$ matrix over $\F_q$ in reduced row echelon form, the element
$$f_\lambda := \sum_{v \in V} (\lambda_1 v, \dots , \lambda_n v) = \sum_{v\in V} v\otimes \lambda \in
(\C V \otimes W)^{\text{Sp}(V)}$$
(we may consider $\lambda \in \F_q^n = W$.

\begin{lemma}
For $\lambda \in W $ a $1 \times n$ matrix in reduced row echelon form such that
$B (\lambda, \lambda) \neq 0$, the element $f_\lambda / q^N$ is a reflection
\beg{flambdasgivereflections}{(f_\lambda/ q^N) \star (f_\lambda/ q^N) = (0) = Id_{\omega [ V\otimes W]}}
\end{lemma}

\vspace{3mm}

\begin{proof}
Suppose $\lambda \in W$ is a $1 \times n$ matrix in reduced row echelon form.
Then, applying \rref{FullGeneralityStarVOW}, we get that
\beg{flambdastarflambdastart}{
\begin{array}{c}
\displaystyle f_\lambda \star f_\lambda = \sum_{u,v \in V} (u \otimes \lambda) \star (v \otimes \lambda) =\\
\displaystyle \sum_{u,v \in V} \psi (\frac{B (\lambda , \lambda)\cdot  S(u,v)}{2}) ((u+v) \otimes \lambda).
\end{array}}
If $B (\lambda, \lambda) \neq 0$, substituting $u' = u+ v$ gives
$$f_\lambda \star f_\lambda = \sum_{u,v \in V} \psi (\frac{  S(u',v) \cdot B (\lambda , \lambda)}{2}) \cdot
(u' \otimes \lambda).$$
Collecting terms, for each non-zero $u' \in V$, the coefficient of $u' \otimes \lambda$ is a non-trivial
sum of characters, giving $0$. At $u' = 0$, each $v \in V$ contributes coefficient $\psi (0) = 1$. Hence,
$$f_\lambda \star f_\lambda = q^{2N} \cdot (0) \in (\C V \otimes W)^{\text{Sp}(V)},$$
giving \rref{flambdasgivereflections}.

\end{proof}

\vspace{3mm}

Now we verify that these proposed generators $f_\lambda$ for $\lambda$ satisfying 
$B(\lambda, \lambda ) \neq 0$
really do act as the reflection
elements of $O(W,B)$ across the orthogonal hyperspaces in $W$ to 
each $\lambda$.
To see this we use the Schr\"{o}dinger model for each factor
$\omega_{a_i}[V]$ in $\text{Res}_{\text{Sp}(V)} (\omega [ V\otimes W]) = \omega_{a_1} [V] \otimes \dots\otimes \omega_{a_n} [V]$ and use
 \rref{TrueStarAction} to each $\omega_{a_i}[V]$.

\vspace{3mm}

\begin{claim}\label{ClaimFLambdaAction}
Consider a $\lambda \in W$ in reduced row echelon form such that $B (\lambda, \lambda) \neq 0$.
For $x\in \Lambda^-$, $w\in W$, we have
\beg{GeomActionFLambda}{\frac{f_\lambda ( x \otimes w)}{q^N} = x\otimes (w- 2\frac{w B \lambda^T}{B (\lambda, \lambda)} \lambda).}
\end{claim}

\begin{proof}[Proof of Claim \ref{ClaimFLambdaAction}]
Fix $\lambda \in W$ in reduced row echelon form such that $B (\lambda, \lambda) \neq 0$.
Consider an element $x \in \Lambda_-$, $w= (w_1,\dots , w_n) \in W= \F_q^n$.
Writing out $f_\lambda$, we have
\beg{writeoutflappdtows}{\begin{array}{c}
f_{\lambda} (w \otimes x) = f_\lambda (w_1  x, \dots , w_n x) =\\[1ex]
\displaystyle \sum_{v_\pm \in \Lambda_\pm}
(\lambda_1v_+ + \lambda_1 v_-, \dots , \lambda_n v_+ + \lambda_n v_-) (x_1, \dots , x_n)
\end{array}}
Recalling \rref{TrueStarAction}, for each $i = 1, \dots , n$, applying
an element $\lambda_i v_+ + \lambda_i v_-$ of $V$ acts on $x_i \in \Lambda_-$, 
according to the structure of $\omega_{a_i} [V]$, by
$$\begin{array}{c}
(\lambda_i v_+ + \lambda_i v_-) (w_i x) = \\
\displaystyle \psi_{a_i} (\lambda_i w_i \cdot S (v_+, x) + \lambda_i^2 \frac{S(v_+, v_-)}{2}) (\lambda_i v_- + w_i x).
\end{array}$$
Therefore, in the tensor product $\omega_{a_1}[V] \otimes \dots \otimes \omega_{a_n}[V]$,
a term of \rref{writeoutflappdtows}
for a choice of $v_+ \in \Lambda_+$, $v_- \in \Lambda_-$
reduces to the $n$-tuple of vectors
\beg{Nocoeffvectorwriteoutlv-+w}{(\lambda_1 v_- + w_1 x, \dots , \lambda_n v_- + w_nx)
= \lambda \otimes v_- + w \otimes x,}
writing $\lambda \otimes v_- = (\lambda_1 v_- , \dots , \lambda_n v_-)$ 
as a $1 \times n$ matrix 
with entries in $\Lambda_-$,
multiplied by the coefficient
\beg{Coeffofvectorwriteoutlv-+w}{\begin{array}{c}
\displaystyle
\psi (\sum_{i = 1}^n a_i (\lambda_i w_i\cdot S(v_+, x) + \lambda_i^2 \frac{S(v_+, v_-)}{2}))
 =\\
\displaystyle \psi_1 (S (v_+, \sum_{i=1}^n a_i ( \lambda_i w_i x + \frac{\lambda_i^2 v_-}{2}))).
\end{array}}
Now, since the term \rref{Nocoeffvectorwriteoutlv-+w} does not depend on $v_+$, for a fixed
$v_-$, its coefficient in \rref{writeoutflappdtows} is the sum over all $v_+ \in \Lambda_+$
of terms \rref{Coeffofvectorwriteoutlv-+w}. Again, since linear sums of characters are
$0$, this gives that the coefficient of \rref{Nocoeffvectorwriteoutlv-+w} vanishes unless
\beg{NoZerov-conditionflappdtowrmk}{\sum_{i=1}^n a_i ( \lambda_i w_i x+ \frac{\lambda_i^2 v_-}{2}) = 0,}
in which case for every choice of $v_+$ \rref{Coeffofvectorwriteoutlv-+w} is $1$,
and therefore the coefficient is $q^N$. Now we may rewrite
\rref{NoZerov-conditionflappdtowrmk} as
$$
\frac{(a_1 \lambda_1^2 + \dots + a_n \lambda_n^2) v_-}{2} +(\sum_{i=1}^n a_i \lambda_i w_i)
x = 
\frac{B(\lambda, \lambda)}{2} v_- +  (w B \lambda^T)\cdot x, 
$$
using $w B \lambda^T=\sum_{i=1}^n a_i \lambda_i w_i $, considering $\lambda$ as a $1 \times n$ matrix,
$B$ as an $n \times n$ matrix, and $w^T = (w_1, \dots , w_n)^T$ as an $n \times 1$ matrix.
Therefore, the only surviving term \rref{Nocoeffvectorwriteoutlv-+w}
occurs with coefficient $q^N$ for 
$$v_- = -2\frac{w B \lambda^T}{B(\lambda, \lambda)} x.$$
Substituting this in \rref{Nocoeffvectorwriteoutlv-+w}, we get that
$$f_\lambda (w \otimes x) = q^N \cdot (x\otimes (w - 2\frac{w B \lambda^T}{B(\lambda, \lambda)}\lambda)),$$
giving \rref{GeomActionFLambda}.
\end{proof}

\vspace{5mm}

Since these reflections generate $\text{O}(W,B)$, this claim therefore implies that
for any $\phi \in \text{O}(W,B)$, it acts on an element $x \otimes w\in \Lambda_- \otimes W$ by
$$x \otimes w \; \mapsto \; x \otimes \phi (w),$$
which is precisely its action as an element of $\text{O}(W,B) \subseteq \text{Sp}(V) \times \text{O}(W,B) \subseteq \text{Sp}(V \otimes W)$.
Therefore the subalgebra generated by the elements $f_\lambda$ for $\lambda \in W$
in reduced row echelon form such that $B(\lambda, \lambda) \neq 0$ is isomorphic to 
the group algebra
$$\C \text{O}(W,B) \subseteq \text{End}_{\text{Sp}(V)} (\omega [ V\otimes W])$$
with each generating group element acting according to the geometric representation
action on $\omega [V\otimes W]$. In particular, every generating element of $O(W,B)$
is an automorphism of $\omega [ V\otimes W]$, and therefore cannot factor through any
smaller tensor power of oscillator representations $\omega_a [V]$.
Hence, it is in the top part of the endomorphism algebra and we have \rref{GeometricSubAlgOWB}.
\end{proof}

\vspace{5mm}

\noindent {\bf Remark:} One may ask what the significance of 
an element $f_\lambda$ is for $\lambda \in W$ in reduced row echelon form such that
$B(\lambda, \lambda)$.

\begin{proposition}
Suppose $\lambda_1, \dots , \lambda_k \in W$ are $1 \times n$ matrices in reduced row echelon form
which are linearly independent and such that their
span $\langle\lambda_1, \dots , \lambda_k \rangle \subseteq W$
is an isotropic subspace with respect to $B$. Then $f_{\lambda_i}/ q^{2N}$ are all commuting
idempotents and
\beg{Imageoff1dotsfkis}{\text{Im} (\frac{f_{\lambda_1} \star \dots \star f_{\lambda_k}}{q^{2Nk}})
\cong \text{Res}_{\text{Sp}(V)} (\omega [ V\otimes W[-k]])}
considering the restriction of $\omega [ V\otimes W[-k]]$ to $\text{Sp}(V)$ along the inclusion
$$\text{Sp} (V) \hookrightarrow \text{Sp}(V) \times \text{O}(W[-k], B[-k]) \hookrightarrow \text{Sp}(V \otimes W[-k]).$$
\end{proposition}

\begin{proof}

Now in the case when $B (\lambda, \lambda) =0$, in \rref{flambdastarflambdastart},
the coefficient of every term is trivial. Substituting $u' = u +v$ gives
$$f_\lambda \star f_\lambda = \sum_{u', v\in V} u' \otimes \lambda = q^{2N} f_\lambda,$$
and so $f_\lambda/ q^{2N}$ is an idempotent.
Now for any $\lambda, \mu \in W$ considered as $1 \times n$ matrices in reduced row echelon
form, we have
$$\begin{array}{c}
\displaystyle f_\lambda \star f_\mu = \sum_{u,v \in V} (u \otimes \lambda) \star (v \otimes \mu)=\\
\displaystyle \sum_{u,v \in V} \psi (\frac{B (\lambda, \mu) \cdot S(v,w)}{2}) \cdot ( u\otimes \lambda + v\otimes \mu).
\end{array}$$
In particular, this immediately gives that for $\lambda, \mu$ such that $\langle \lambda, \mu\rangle$
is isotropic with respect to $B$ then $f_\lambda$ and $f_\mu$ commute.

It remains to prove that \rref{Imageoff1dotsfkis} holds. Fix $k \leq h_W$.
For simplicity, without loss of generality, let us write
\beg{GenBFormRmk}{B = \begin{pmatrix}
1 & 0\\
0 & -1
\end{pmatrix}^{\oplus k}\oplus B[-k], \hspace{5mm} B[-k] = \begin{pmatrix}
a_{2k+1} & 0 & \dots & 0\\
0 & a_{2k+2} & & 0\\
\vdots & & & \vdots\\
0 & 0 & \dots & a_n
\end{pmatrix}}
(i.e. $B$ is diagonal, and the first $2k$ diagonal entries are alternating $\pm 1$).
Then \rref{ResSpAsTens} gives
\beg{ksSeperateForIdemFLambs}{\begin{array}{c}
\text{Res}_{\text{Sp}(V)} (\omega [ V\otimes W]) =\\
 (\omega[ V] \otimes \omega_{-1}[V])^{\otimes k}
\otimes \text{Res}_{\text{Sp}(V)} ( \omega[ V\otimes W[-k]]).
\end{array}}

Let us consider an individual $\omega[V] \otimes \omega_{-1} [V]$ factor, i.e.
consider the case of $n=2$ and $B = \begin{pmatrix} 1 & 0 \\ 0 & -1\end{pmatrix}$.
In this case, $\lambda = (1,1)$ gives an idempotent
$$f_{(1,1)}/q^{2N} = \frac{1}{q^{2N}} \sum_{v\in V} (v,v).$$
Now the only term of $f_{(1,1)}/ q^{2N}$ contributing to its trace is $v = 0$, giving
$$\text{dim} (\text{Im} (f_{(1,1)}/q^{2N})) = tr ( f_{(1,1)}/q^{2N}) = 1,$$
and hence $\text{Im} (f_{(1,1)}/q^{2N})$ is the trivial representation of $\text{Sp}(V)$
(since it is the only one-dimensional dimension).

Now let us return to the general case of $n$ and $B$ of the form \rref{GenBFormRmk}.
Consider $\lambda_i$ to be the $n$-tuple
with all entries $0$ except for the $(2i-1)$th and $2i$th which are prescribed to be $1$.
Then as an element of \rref{ksSeperateForIdemFLambs}, each $f_{\lambda_i}$
is a tensor product of $Id_{\omega [V] \otimes \omega_{-1}[V]}$ factors, except for the
$i$th one, which is replaced by a factor $f_{(1,1)} \in End (\omega [ V] \otimes \omega_{-1}[V])$,
with $Id_{\omega[ V\otimes W[-k]]}$. Therefore, we may consider
$$\frac{f_{\lambda_1} \star \dots \star f_{\lambda_k}}{q^{2Nk}} = (f_{(1,1)})^{\otimes k}
\otimes Id_{\omega [ V\otimes W[-k]]},$$
which then has image 
$$1^{\otimes k} \otimes \text{Res}_{\text{Sp}(V)} (\omega [ V\otimes W[-k]])= \text{Res}_{\text{Sp}(V)} (\omega [ V\otimes W[-k]]).$$

To get the statement in the general case of $\lambda_1, \dots , \lambda_k$,
note that the images of idempotents $(f_{\lambda_1} \star \dots \star f_{\lambda_k}) / q^{2Nk})$ for $\lambda_i \in W$ generating a $k$-dimensional isotropic subspace are isomorphic,
since any basis of any $k$-dimensional istorpic subspace can be transformed into
any other using an orthogonal group element, by Witt's Theorem. Hence, all images
of such $(f_{\lambda_1} \star \dots \star f_{\lambda_k}) / q^{2Nk})$ are isomorphic.
\end{proof}

\vspace{5mm}

\subsection{Generators of the symplectic group}\label{TopSubSectOrtho}

The purpose of this subsection is to prove Proposition
\ref{GroupInTop}, part \rref{GroupInTopOrtho}
in the case when the reductive dual pair $(\text{Sp}(V), \text{O}(W,B))$
in $\text{Sp} (V\otimes W)$ is in the orthogonal stable range.

\begin{proof}[Proof of Proposition \ref{GroupInTop}, \rref{GroupInTopOrtho}]
We use the same method as in 
the previous subsection, by finding elements of
$\C (V\otimes W)^{\text{O}(W,B)}$ and proving they act on $\omega [ V\otimes W]$
according to the representation-theory
action of corresponding generators of $\text{Sp}(V)$.

Let us again recall how to apply an endomorphism in 
We will again use the Schr\"{o}dinger model of $ \omega [ V \otimes W]$. Let us
write $V = \Lambda^+ \oplus \Lambda^-$ for $V$'s decomposition into complementary Lagrangians
with respect to $S$. According to \rref{TrueStarAction},
writing an element of the algebra
$(\C ( V \otimes W), \star)$ as $(v_1^+ + v_1^-, \dots , v_n^+ +v_n^-)$ for $v_i^\pm \in \Lambda^\pm$,
and writing an element of $ \omega [  V\otimes W] = \C W \otimes \Lambda^-$ as
$(x_1 , \dots , x_n)$ for $x_i \in \Lambda^-$, we have
\beg{SymplecticTwistedSchroAct}{\begin{array}{c}
(v_1^+ + v_1^-, \dots , v_n^++v_n^-) \cdot (x_1 , \dots , x_n) =\\[1ex]
\displaystyle \psi (\sum_{j=1}^n a_i \cdot (S (v_i^+ , x_i) +  \frac{S(v_i^+, v_i^-)}{2})) \cdot (v_1^- + x_1, \dots , v_n^- +x_n)
\end{array}}
(where $\psi$ denotes the non-trivial additive character corresponding to our choice of oscillator
representation $\boldsymbol \omega$).

To write this in terms of the symmetric bilinear form $B$ and vectors $u\in W$,
let us fix bases of the Lagrangians $\Lambda^+$, $\Lambda^-$ such that, with respect to the
basis $e_1^+ , \dots , e_N^+, e_1^- , \dots , e_N^-$ of $V$,
the symplectic form $S$ is 
$$\begin{pmatrix}
0 & I\\
-I & 0
\end{pmatrix}.
$$
Then, alternatively, writing an element of $W \otimes V$
as $(z_1^+ , z_1^-, \dots , z_{N}^+ , z^-_N)$ for $z_i^\pm \in W \otimes \F_q\{e_i^\pm\}$,
and an element of $W \otimes \Lambda^-$ as 
$(u_1, \dots , u_N)$ for $u_i \in W \otimes  \F_q\{e_i^-\}$, we have
\beg{zipmonuiSchroModelAct}{\begin{array}{c}
(z_1^+, z_1^-, \dots , z_{N}^+ , z^-_N) \cdot (u_1, \dots , u_N) = \\[1ex]
\displaystyle
\psi (\sum_{i=1}^N B( z_i^+, u_i) + \frac{B(z_i^+, z_i^-)}{2}) \cdot (u_1 + z_1^-, \dots , u_N + z_N^-)  
\end{array}}

\vspace{5mm}

Now we can pick elements of $\C (V\otimes W)^{\text{O}(W,B)}$ designed
to act as generators of $\text{Sp}(V)$.

Let us first consider the case when $dim (V) = 2$. 
In this case, we may reduce \rref{zipmonuiSchroModelAct} to
$$(z^+, z^-) \cdot (u) = \psi (B (z^+, u) + \frac{B (z^+, z^-)}{2}) \cdot (u + z^-).$$

From this perspective, the action of the matrices in $SL_2 (\F_q)$
$$
\begin{pmatrix}
0 &1\\
-1 & 0
\end{pmatrix},
\hspace{10mm}
\begin{pmatrix}
1 & 0\\
t & 1
\end{pmatrix}
$$
on the oscillator representation should correspond to transformations
\beg{ActionsFromOsc}{\resizebox{0.87\textwidth}{!}{$\displaystyle (u) \mapsto \frac{1}{(-q)^{n/2}} \sum_{w \in W} \psi (B (-u, w)) \cdot (w), \hspace{3mm}
(u) \mapsto \psi (\frac{t B (u,u )}{2}) \cdot (u)$}}
respectively. The matrices
$$\begin{pmatrix}
s & 0\\
0 & 1/s
\end{pmatrix}$$
act by $(u) \mapsto \epsilon (s) \cdot (s \cdot u)$, for $s\in \F_q^\times$.
Now consider, for example, the operators given by the action of
$$g_{t} = \frac{\epsilon (t) }{(-q)^{n/2}} \cdot \sum_{z \in W} \psi (\frac{t B(z,z)}{2}) \cdot (z, t z) \in \C ( W \otimes V)^{\text{O} (W)}$$
for $t \in \F_q^\times$. Applied to $u \in W$, these endomorphisms give
$$ g_t \cdot (u) = \frac{\epsilon(t)}{(-q)^{n/2}} \cdot \sum_{z \in W} \psi ( B (z,u) + B (z, tz)) \cdot (u+ tz),$$
which, replacing $w = u+tz$, can be simplified to
$$\begin{array}{c}
\displaystyle\frac{\epsilon(t)}{(-q)^{n/2}} \cdot \sum_{w \in W} \psi (B(\frac{w-u}{t}, w)) \cdot (w) =\\
\displaystyle \frac{\epsilon (t)}{(-q)^{n/2}} \cdot \sum_{w \in W} \psi (\frac{B(w,w)}{t}) \cdot \psi ( B ( \frac{-u}{t} , w)) \cdot (v). 
\end{array}$$
Therefore, each $g_t$
corresponds to the group action of the composition of matrices 
\beg{PoissonDists}{
\begin{pmatrix}
1 & 0\\
2/t & 1
\end{pmatrix}
\begin{pmatrix}
0 & 1\\
-1 & 0
\end{pmatrix}
\begin{pmatrix}
1/t & 0\\
0 & t
\end{pmatrix}
 = \begin{pmatrix}
0 & t \\
-1/t & 2
\end{pmatrix}}
on $\omega [ W \otimes V]$ (note that $\epsilon (t) = \epsilon (1/t)$).
These matrices generate $\text{SL}_2 (\F_q)$.

Now, for general $V$, $\text{dim} (V)= 2N$, we may find these generators 
for all choices of 1-dimensional subspaces in a Lagrangian (and its dual).
This system of group algebras
over $\text{SL}_2 (\F_q)$ corresponding to choices of isotropic $1$-dimensional
subspace of $V$ therefore generate $\text{Sp} (V)$.
Hence, we get
\beg{TopSubalg}{\C \text{Sp} (V) \subseteq \text{End}_{\text{O}(W)} ( \omega [W \otimes V]).}
Additionally, since these endomorphisms encode the geometric action of
$\text{Sp} (V) \subseteq \text{Sp} (W \otimes V)$ on $\omega [ W \otimes V]$
and are, in particular, bijective, they are
inexpressible as compositions factoring through a lower degree tensor power
of $\text{Res}_{\text{O}(W,B)} (\omega [\F_q^2 \otimes W]) \cong \C W^-$.
\end{proof}

\vspace{5mm}

The reason why we use the matrices \rref{PoissonDists} (instead of a
more common set of generators of $SL_2 (\F_q)$, such as \rref{TheUsualGenerators} below)
is due to the fact that their corresponding elements
of $\C (W \otimes \F_q^2)^{O(W)}$ are fairly simple and easy to guess.
While not directly necessary to the logic of the proof of the
results of the above proposition,
it is, however, instructive to write down the explicit formulae for the elements corresponding to
the matricies
\beg{TheUsualGenerators}{\begin{pmatrix}
t & 0 \\
0 & 1/t
\end{pmatrix},
\begin{pmatrix}
0 & 1\\
-1 & 0
\end{pmatrix},
\begin{pmatrix}
1& 0\\
s & 1
\end{pmatrix}
}
for $t, s \in \F_q^\times$.

\vspace{5mm}

To do this, we need to introduce certain
constants, arising from the quadratic sums of characters. They
will depend on our choice of charcter $\psi$.
If we use a different character, the answer may differ by a sign.
Let us write $q= p^\ell$ for $p$ an odd prime, and $\ell \in \N$.
To avoid confusion, we
denote the quadratic multiplicative characters of $\F_q$ and $\F_p$ by
$$\epsilon_{p} : \F_p^\times \rightarrow \{\pm 1\}, \hspace{10mm} \epsilon_q: \F_q^\times \rightarrow
\{\pm 1\},$$
respectively. As usual, we extend these to $0$ by $\epsilon_p (0) = \epsilon_q (0) = 0$.
Denoting the norm of the field extension
by $N_{\F_q/ \F_p} : \F_q^\times \rightarrow \F_p^\times $, we have
\beg{enise}{\epsilon_p \circ N_{\F_q/ \F_p}  = \epsilon_q.}

We may re-write the classical quadratic Gauss sum as
\beg{ReductionToActualGaussSum}{\sum_{n \in \F_p} e^{\frac{2\pi i}{p}
n^2} = \sum_{m \in \F_p} (1+ \epsilon_p (m)) \cdot
e^{\frac{2\pi i}{p}n^2} = 
\sum_{m \in \F_p} \epsilon_p (m) \cdot e^{\frac{2\pi i}{p}n^2}}
(since the linear sum of characters is $0$, and for each $m \in \F_p$, there are exactly
$1 + \epsilon_p (m)$ elements in $\F_p$ whose square is $m$),
which is well-known to equal
\beg{SinglepCase}{\sum_{n \in \F_p} e^{\frac{2\pi i}{p}a \cdot n^2} =
\epsilon_p (a) \cdot \sqrt{ \epsilon_p (-1) \cdot p}.}

Now the same argument as \rref{ReductionToActualGaussSum} can be applied to give
$$\begin{array}{c}
\displaystyle \sum_{x \in \F_q} \psi (x^2) = \sum_{x \in \F_q}e^{\frac{2\pi i}{p} Tr_{\F_q/\F_p} ( x^2)}=\\
\\
\displaystyle \sum_{y \in \F_q} \epsilon_q (y) \cdot e^{\frac{2\pi i}{p} Tr_{\F_q/\F_p} ( y)}.
\end{array}$$
Applying the Hasse-Davenport relation for Gauss sums 
to \rref{SinglepCase},
we get that
$$\sum_{y \in \F_q} \epsilon_q (y) \cdot e^{\frac{2\pi i}{p} Tr_{\F_q/\F_p} ( y)} = (-1)^{\ell+ 1} \cdot \left( \sqrt{ \epsilon_p (-1) \cdot p}\right)^\ell ,$$
which simplifies to give
\beg{Psi1Cases}{
\sum_{x \in \F_q} \psi (x^2)  = (-1)^{\ell +1} \sqrt{ \epsilon_q (-1) \cdot q}
}

Now, for $c \in \F_q^\times$, again since a linear sum of characters vanishes, we have
$$ \sum_{x\in \F_q} \psi ( c\cdot x^2) = \epsilon_q (c) \cdot \sum_{x \in \F_q} \psi (x^2).$$
Combining this with \rref{Psi1Cases}, we find that
$$\begin{array}{c}
\displaystyle \sum_{u \in W} \psi ( c\cdot B (u,u)) =   \sum_{u_1, \dots , u_n \in \F_q} \psi (
\sum_{i =1}^n c \cdot a_i \cdot u_i^2) =\\
\\
\displaystyle \prod_{i=1}^n \sum_{u_i \in W} \psi (c \cdot a_i \cdot u_i^2) = \epsilon_q (c^n \cdot a_1 \dots a_n) \cdot 
(\sum_{x \in \F_q} \psi (x^2))^n =\\
\\
\displaystyle (-1)^{n (\ell + 1)} \cdot \text{disc} (B) \cdot q^{n/2}\cdot \epsilon_q (c)^n  \cdot \epsilon_q(-1)^{n/2} ,
\end{array}
$$
where $\text{disc}(B)$ denotes is discriminant, i.e. $\epsilon_q (\text{det}(B))$.
For notational brevity, we denote these coefficients by
\beg{Konstant}{\begin{array}{c}
\displaystyle K(c):=\sum_{u \in W} \psi ( c\cdot B (u,u))  =\\
 (-1)^{n (\ell + 1)} \cdot \text{disc} (B) \cdot q^{n/2}\cdot \epsilon_q (c)^n  \cdot \epsilon_q(-1)^{n/2}.
\end{array}}

\vspace{3mm}

\begin{proposition}\label{ExplicitFormulaeForGensSL2Prop}
Suppose we are given an orthogonal space $(W,B)$ such that
$(SL_2 (\F_q), \text{O}(W,B))$ forms a reductive dual pair in the orthogonal stable range.
\begin{enumerate}
\item \label{ExplicitFormulaeForGensSL2PropScal} For $t\neq 1\in \F_q^\times$, the matrix
$$\begin{pmatrix}
t & 0\\
0 & 1/t
\end{pmatrix} \in SL_2 (\F_q)
$$
as an element of $\text{End}_{\text{O}(W,B)} (\omega [\F_q^2\otimes W])$ corresponds to
the element
$$\alpha_t := \frac{\epsilon (t) }{q^n} \cdot \sum_{y^+, y^- \in W} \psi (- \frac{t+1}{2(t-1)} \cdot B (y^+, y^-)) \cdot (y^+, y^-)$$
of $(\C V\otimes W)^{\text{O}(W,B)}$.

\item \label{ExplicitFormulaeForGensSL2PropFour} The matrix
$$\begin{pmatrix}
0 & 1\\
-1 & 0
\end{pmatrix}\in SL_2 (\F_q)
$$
as an element of $\text{End}_{\text{O}(W,B)} (\omega [\F_q^2 \otimes W])$ corresponds to the element
$$\resizebox{0.95\textwidth}{!}{$\displaystyle \beta:= \frac{1}{K(1) \cdot (-q)^{n/2}}\sum_{y^+, y^- \in W} \psi \left(\frac{1}{4} (B(y^+, y^+) + B (y^-, y^-))\right)
\cdot (y^+, y^-)$} $$
of $(\C V\otimes W)^{\text{O}(W,B)}$.

\item \label{ExplicitFormulaeForGensSL2PropAdv}
For $s\in \F_q^\times$, the advection matrix
$$\begin{pmatrix}
1 & 0\\
s & 1
\end{pmatrix} \in SL_2 (\F_q)
$$
as an element of $\text{End}_{\text{O}(W,B)} (\omega [\F_q^2 \otimes W])$ corresponds to the element
$$\gamma_s := \frac{1}{K(\frac{-1}{2s})} \sum_{z \in W} \psi (-\frac{1}{2s} B (z,z)) \cdot (z,0)$$
of $(\C V\otimes W)^{\text{O}(W,B)}$.

\end{enumerate}
\end{proposition}

\begin{proof}[Proof of Proposition \ref{ExplicitFormulaeForGensSL2Prop}]

We begin with the proof of (\ref{ExplicitFormulaeForGensSL2PropScal}): 
For an element $u\in W \cong W \otimes \Lambda^-$, for $t\in \F_q^\times$,
we can calculate the following:
\beg{ftappliedtoufirsttry}{\begin{array}{c}
\alpha_t(u)= \\[1ex]
\resizebox{0.8\textwidth}{!}{$\displaystyle \frac{\epsilon (t)}{q^n} \cdot\sum_{y^+, y^- \in W} \psi \left( (\frac{1}{2}- \frac{t+1}{2(t-1)}) \cdot B (y^+, y^-)
+ B (y^+, u)\right)\cdot (y^- + u)$} \\
\\
\displaystyle = \frac{\epsilon (t)}{q^n} \cdot 
\sum_{y^+, y^- \in W} \psi (B (y^+ ,  - \frac{y^-}{t-1} + u)) \cdot(y^- + u).
\end{array}
}
The sum runs over arbitrary choices of $y^+$, meaning that for fixed $u\in W$ and chosen $y^- \in W$,
the coefficient sum
\beg{TorusCoeffLinChar}{\sum_{y^+ \in W} \psi ( B (y^+ , -\frac{y^-}{t-1} + u))}
of the vector $(y^- + u)$ is a linear sum of characters, and is therefore $0$, unless
$u = y^-/(t-1)$,
in which case \rref{TorusCoeffLinChar} is $q^n$. Hence, the only contributing
choice of $y^-$ is $y^- = (t-1) \cdot u$. Therefore, \rref{ftappliedtoufirsttry} simplifies as
$$\alpha_t (u) =\epsilon (t) \cdot \frac{q^n}{q^n} \cdot ( (t-1)\cdot u + u) = (t\cdot u), $$
agreeing with the action of the proposed matrix on the oscillator representation
$\C W $.

\vspace{3mm}

Now we prove (\ref{ExplicitFormulaeForGensSL2PropFour}): 
For $u \in W$, at each choice of $y^+, y^- \in W$, applying the corresponding term of the sum in
$\beta$ (disregarding the coefficient, for now) gives
$$
\begin{array}{c}
\displaystyle \psi \left(\frac{1}{4} (B(y^+, y^+) + B (y^-, y^-))\right)
\cdot (y^+, y^-) (u)=\\
\\
\displaystyle \psi \left(\frac{1}{4} (B(y^+, y^+) + 2 B (y^+, y^-) + B (y^-, y^-)) + B (y^+, u)\right) \cdot (y^- + u)=\\
\\
\displaystyle \psi ( B (\frac{y^+ + y^-}{2}, \frac{y^++y^-}{2}) + B(y^+, u) ) \cdot (y^- + u)
\end{array}
$$
Therefore, we have
\beg{happliedtouFourTrans1sttry}{\begin{array}{c}
\beta(u) =\\[1ex]
\resizebox{0.85\textwidth}{!}{$\displaystyle
\frac{1}{K (1)\cdot (-q)^{n/2}} \sum_{y^+, y^- \in W}\psi ( B (\frac{y^+ + y^-}{2}, \frac{y^+ + y^-}{2}) + B(y^+, u) ) \cdot (y^- + u).$}
\end{array}}
Renaming variables using $z = y^- + u$, we may rewrite this as
\beg{happtouwithznoy-}{\resizebox{0.85\textwidth}{!}{$\displaystyle\frac{1}{K(1) \cdot (-q)^{n/2}}\sum_{y^+, z \in W} \psi ( B ( \frac{y^+ + z -u}{2}, \frac{y^+ + z-u}{2}) + B (y^+, u)) \cdot (z)$}.}
Now we may also notice that
$$\begin{array}{c}
\displaystyle
B ( \frac{y^+ +z -u}{2}, \frac{y^+ +z-u}{2}) + B (y^+, u) = \\
\\
\displaystyle B (\frac{y^+ + z + u}{2}, \frac{y^+ +z+u}{2})
- B(z, u),
\end{array}$$ 
allowing us to rewrite \rref{happtouwithznoy-} as
$$\frac{1}{K(1) \cdot (-q)^{n/2}} \sum_{z,y^+ \in W} 
\psi (B(\frac{y^++z+u}{2}, \frac{y^++z+u}{2}) - B(z,u)) \cdot (z).
$$
Renaming variables using $ w= (y^+ + z+u )/2$ gives
$$ \frac{1}{K(1) \cdot (-q)^{n/2}} \sum_{z,w \in W} \psi (B (w,w)) \psi (- B(z,u)) \cdot (z),$$
which, applying \rref{Konstant}, reduces to
$$\beta (u) =\frac{1}{(-q)^{n/2}} \sum_{z \in W} \psi ( - B(z,u)) \cdot (z),$$
which is precisely the action \rref{ActionsFromOsc}.

\vspace{3mm}

Finally, we prove (\ref{ExplicitFormulaeForGensSL2PropAdv}):
For $u \in W$, $s\in \F_q^\times$,
\beg{gammasu}{
\begin{array}{c}
\gamma_s (u) = \\[1ex]
\displaystyle   \frac{1}{K(\frac{-1}{2s})} \sum_{z \in W} \psi (-\frac{1}{2s} B (z,z)) \cdot (z,0) (u) =\\
\\
\displaystyle  \frac{1}{K(\frac{-1}{2s})}
\sum_{z \in W} \psi ( -\frac{1}{2s} B (z,z) + B (z,u)) \cdot (u).
\end{array}
}
Now we may notice that 
$$
\begin{array}{c}
\displaystyle B (z,u) - \frac{1}{2s} B (z,z) = -\frac{1}{2s} ( - 2s \cdot B (z,u) + B (z,z)) =\\
\\
\displaystyle
-\frac{1}{2s}  \left( B(su-z, su-z) - s^2 \cdot B (u,u)\right) =\\
\\
\displaystyle  -\frac{1}{2s} \cdot B (su -z, su-z) + \frac{s}{2} \cdot B(u,u).
\end{array}
$$
Therefore, substituting $w = su -z$, we can rewrite \rref{gammasu} as
\beg{pleaselettheconstantberight}{
\gamma_s (u) = \frac{1}{K(\frac{-1}{2s})}  \sum_{w \in W} \psi (-\frac{1}{2s}\cdot B (w,w))\cdot
\psi (\frac{s}{2} \cdot B (u,u)) \cdot (u).
}
Since, by definition,
$$\begin{array}{c}
\displaystyle
\sum_{w\in W} \psi (- \frac{1}{2s} \cdot B (w,w)) = K (\frac{-1}{2s}),
\end{array}$$
\rref{pleaselettheconstantberight} then reduces to
$$\gamma_s (u)= \psi (\frac{s}{2} \cdot B(u,u)) \cdot (u),$$
agreeing precisely with \rref{ActionsFromOsc}.

\end{proof}

\vspace{3mm}

It may also be helpful to compute some examples of $\star$ applied to these 
elements, and see how it recovers matrix multiplication (especially to see the relationship
between $g_t$, $\alpha_t$, $\beta$, and $\gamma_{2/t}$).
We do an example of such a computation in the Appendix.

\vspace{5mm}

\section{Combinatorics}\label{CombinSection}

In the previous section, we were able to find a subalgebra in the endomorphism.
To set up for an inductive argument, we therefore need to prove some combinatorial
identities about the relationship between $\omega [V\otimes W]$ and lower $\omega[ V\otimes W[-\ell]$ (resp. $\omega [ V[-\ell] \otimes W]$) for the symplectic (resp. orthogonal) stable statements.
We will also need to prove that the dimensions of the endomorphism algebras
of the restricted oscillator representations match the dimensions of their claimed decompositions
into matrix algebras of group algebras. This is the purpose of this section:

\begin{proposition}\label{HomUevenDimsCor}
We use the notation in the assumptions of Theorem \ref{TheoremGenuine}. 
\begin{enumerate}
\item \label{SympPartUnevenProp} If $(\text{Sp}(V), \text{O}(W,B))$
is a reductive dual pair in the symplectic stable range, then for every
\beg{DimensionUnevenHomsPropSymp}{\begin{array}{c}
\text{dim} (\text{Hom}_{\text{Sp}(V)} (\omega [ V \otimes W[-\ell]], \omega [ V\otimes W])) =\\[1ex]
\resizebox{0.87\textwidth}{!}{$\displaystyle\sum_{k = 0}^{h_W -\ell} \frac{|\text{O}(W[-\ell],B[-\ell])|}{|P^{B[-\ell]}_{k}|} \cdot \frac{|\text{O}(W,B)|}{|P^B_{ k+\ell}|} \cdot |\text{O} (W[-(k+\ell)], B[-(k+\ell)])|.$}
\end{array}}

\item \label{OrthoPartUnevenProp} If $(\text{Sp}(V), \text{O}(W,B))$
is a reductive dual pair in the orthogonal stable range, then
\beg{DimensionUnevenHomsPropOrtho}{\begin{array}{c}
\text{dim} (\text{Hom}_{\text{Sp}(V)} (\omega [ V \otimes W[-\ell]], \omega_B)) =\\[1ex]
\displaystyle \sum_{k = 0}^{N} \frac{|\text{Sp}(V[-\ell])|}{|P_k^{V[-\ell]}|} \cdot \frac{|\text{Sp}(V)|}{|P^V_{ k+\ell}|} \cdot |\text{Sp} (V[-(k+\ell)])|.
\end{array}}
\end{enumerate}
\end{proposition}

In particular, plugging in $\ell =0$ gives 
\begin{corollary}\label{Combinatoricsdimmatchell=0topfinal}
We use the notation in the assumptions of Theorem \ref{TheoremGenuine}. 
\begin{enumerate}
\item If $(\text{Sp}(V), \text{O}(W,B))$ is in the symplectic stable range then
\beg{SympStablEndDimCombin}{\begin{array}{c} \text{dim} (\text{End}_{\text{Sp}(V)} (\omega [V\otimes W])) =\\ \displaystyle \sum_{k=0}^{h_W} |O (W,B)/ P_{k}^{W,B}|^2 \cdot |O(W[-k], B[-k])|
\end{array}}

\item If $(\text{Sp}(V), \text{O}(W,B))$ is in the orthogonal stable range then
\beg{OrthoStablEndDimCombin}{\text{dim} (\text{End}_{\text{O}(W,B)} (\omega [V\otimes W])) = \sum_{k=0}^{N} |\text{Sp}(V)/ P_k^V|^2 \cdot |\text{Sp}(V[-k])|}
\end{enumerate}
\end{corollary}
\noindent In other words, this result also combinatorially verifies that the dimensions of both sides
of \rref{SympStableEndDecompThm}
(resp. \rref{OrthoStableEndDecompThm}) in part \rref{SymTheoremGenuine}
(resp. part \rref{OrthoTheoremGenuine}) of Theorem \ref{TheoremGenuine} match.

\vspace{3mm}

In Subsection \ref{KeyCombinLemmaSubsect},
we state a combinatorial lemma on Gaussian binomial coefficients
and use it to derive Proposition \ref{HomUevenDimsCor}.
In Subsection \ref{CombinLemmaProofSubsect}, we prove the lemma.

\vspace{5mm}

\subsection{A key lemma}\label{KeyCombinLemmaSubsect}
Let us write $Q_n = q^n +1$. Recall the Gaussian binomial coefficients
\beg{GaussianCoeffDefn}{{a \choose b}_q := \frac{(q^a -1) \cdot (q^{a-1} -1) \dots (q^{a-b+1} -1)}{(q^b -1) \cdot (q^{b-1} -1) \dots (q-1)}.}
The purpose of this section is to prove the following
\begin{lemma}\label{CombinatorialLemma}
For every $p \in \N_0$, for every $r>b \in \N_0$, 
\beg{CombLemmaGenStat}{\begin{array}{c}
Q_{r+p}\cdot Q_{r-1+p} \dots Q_{b+p} =\\[1ex]
\displaystyle \sum_{a=0}^p q^{a (b+a-1)} \cdot {r-b+1 \choose a}_q 
\cdot \prod_{i=p-a+1}^p (q^i -1) \cdot \prod_{j=b+a}^{r} Q_j
\end{array}}
\end{lemma}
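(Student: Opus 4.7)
The plan is to expand both sides as polynomials in $u := q^b$ using the classical $q$-binomial theorem
$$\prod_{j=0}^{n-1}(1+zq^j) = \sum_{k=0}^n \binom{n}{k}_q q^{\binom{k}{2}} z^k$$
and match coefficients. Set $m := r-b+1$. The left-hand side is
$$\prod_{j=b+p}^{r+p} Q_j = \prod_{j=0}^{m-1}(1 + u q^{p+j}) = \sum_{l=0}^{m} \binom{m}{l}_q q^{\binom{l}{2}+pl}\, u^l.$$
For the right-hand side, I apply the same expansion to $\prod_{j=b+a}^{r} Q_j = \prod_{j=0}^{m-a-1}(1+uq^{a+j})$, and use the fact that $q^{a(b+a-1)} = q^{a(a-1)} u^a$, turning each summand into a polynomial in $u$.

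After collecting terms with $u^{l}$ (by setting $l = a+k$ where $k$ is the summation index in the inner $q$-binomial expansion) and applying the standard identities $\binom{m}{a}_q\binom{m-a}{l-a}_q = \binom{m}{l}_q\binom{l}{a}_q$ and the exponent collapse $a(a-1)+a(l-a)+\binom{l-a}{2} = \binom{l}{2}+\binom{a}{2}$, the two sides match iff, for every $0 \le l \le m$,
$$q^{pl} = \sum_{a=0}^{\min(l,p)} \binom{l}{a}_q q^{\binom{a}{2}} \prod_{i=p-a+1}^{p}(q^i-1).$$
This identity is independent of $b$, which is reassuring since the dependence on $b$ in the original formula is entirely absorbed into the variable $u=q^b$ and the index $m = r-b+1$.

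Next, I simplify using $\prod_{j=0}^{a-1}(q^p-q^j) = q^{\binom{a}{2}}\prod_{i=p-a+1}^p(q^i-1)$ (just factor $q^j$ out of the $j$-th factor on the left). The remaining identity becomes
$$q^{pl} = \sum_{a=0}^{\min(l,p)} \binom{l}{a}_q \prod_{j=0}^{a-1}(q^p - q^j).$$
This has a clean combinatorial interpretation: both sides count the number of linear maps $\F_q^l \to \F_q^p$, stratified on the right by $a$, the dimension of the image. Indeed, a map with $a$-dimensional image is determined by its $(l-a)$-dimensional kernel (giving $\binom{l}{a}_q$ choices) together with an injective linear map from $\F_q^l/\ker \cong \F_q^a$ into $\F_q^p$ (giving $\prod_{j=0}^{a-1}(q^p - q^j)$ choices, as this counts ordered $a$-tuples of linearly independent vectors in $\F_q^p$).

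The main obstacle will be the bookkeeping of the $q$-exponents during the coefficient-matching step, particularly verifying the collapse $a(a-1)+a(l-a)+\binom{l-a}{2} = \binom{l}{2}+\binom{a}{2}$ cleanly and confirming that the product formula identity for $q$-binomial coefficients applies in the relevant range. Once these reductions are completed, the final identity is classical and admits the short combinatorial argument above.
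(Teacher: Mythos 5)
Your proposal is correct, and it is a genuinely different argument from the one in the paper. The paper proves the lemma by a two-layer induction: it first rewrites the left-hand side by ``telescoping'' the difference $Q_{k+p}-Q_k = q^k(q^p-1)$, obtaining an intermediate sum of mixed products $\prod Q_j \cdot \prod Q_{j'+p}$, and then proves a separate Claim by a further step-by-step induction that converts each factor $Q_{j'+p}$ into the next missing $Q_j$ plus an error term, tracking a growing chain $\ell_1 \le \cdots \le \ell_a$; the Gaussian binomial coefficient only appears at the very end after recognizing a partition-counting sum. Your approach instead substitutes $u=q^b$, $m=r-b+1$, expands both sides by the $q$-binomial theorem, and uses the identity $\binom{m}{a}_q\binom{m-a}{l-a}_q = \binom{m}{l}_q\binom{l}{a}_q$ together with the exponent collapse $a(a-1)+a(l-a)+\binom{l-a}{2}=\binom{l}{2}+\binom{a}{2}$ to reduce the whole lemma to the single clean identity $q^{pl} = \sum_a \binom{l}{a}_q \prod_{j=0}^{a-1}(q^p-q^j)$, which you correctly recognize as counting linear maps $\F_q^l\to\F_q^p$ stratified by rank (kernel plus injection on the quotient). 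The $b$-dependence genuinely disappears in your reduction, and the range $a\le\min(l,p)$ is handled automatically since $\prod_{j=0}^{a-1}(q^p-q^j)=0$ for $a>p$ and $\binom{l}{a}_q=0$ for $a>l$. Your route buys a conceptual explanation for where the Gaussian coefficients and the products $(q^i-1)$ come from, and avoids the somewhat delicate double induction of the paper; the paper's proof, while more computational, is self-contained and does not invoke the $q$-binomial theorem. I verified the two bookkeeping steps you flagged (the $q$-exponent collapse and the Vandermonde-type product formula) and both check out exactly as stated, so there is no gap.
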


\vspace{3mm}

\noindent Assume Lemma \ref{CombinatorialLemma} holds.
\begin{proof}[Proof of Proposition \ref{HomUevenDimsCor}]
First, let us process the left hand sides of \rref{DimensionUnevenHomsPropSymp} and
\rref{DimensionUnevenHomsPropOrtho}.
Recalling \rref{ResSpAsTens} and
that the tensor product of an oscillator representation with its dual oscillator representation
is the permutation representation, for $0 \leq \ell \leq h_W$, we get
$$\text{Res}_{\text{Sp}(V)} (\omega [ V\otimes W]) = \text{Res}_{\text{Sp}(V)} (\omega [ V\otimes W[-\ell]]) \otimes (\C V)^{\otimes \ell}.$$
Therefore, for $0 \leq \ell \leq h_W$, we have
$$
\text{Hom}_{\text{Sp}(V)} (\omega [V \otimes W[-\ell]], \omega [ V\otimes W]) \cong
\text{Hom}_{\text{Sp}(V)} (1, \C (V^{\oplus (n-\ell)}))$$
Similarly, recalling \rref{ResOrthoAsTens} and \rref{PermOWB}, for $0 \leq \ell \leq N$, we have
$$
\text{Hom}_{\text{O}(W,B)} (\omega [ V[-\ell]\otimes W], \omega [V\otimes W]) \cong \text{Hom}_{\text{O}(W,B)} (1, \C (W^{\oplus 2N-\ell}))
$$
Thus, we may rewrite the left hand side of \rref{DimensionUnevenHomsPropSymp} as
\beg{OddLHSCORStateFirst}{\text{dim} (\text{Hom}_{\text{Sp}(V)} (\omega [ V\otimes W[-\ell]], \omega [ V\otimes W])) = 2 Q_{1} \dots Q_{n-\ell -1}}
and the left hand side of \rref{DimensionUnevenHomsPropOrtho}
\beg{OddLHSCORStateFirstOrtho}{\text{dim} (\text{Hom}_{\text{O}(W,B)} (\omega [ V[-\ell]\otimes W], \omega [ V\otimes W])) = Q_{1} \dots Q_{2N-\ell}}
by Lemma \ref{FixedPointCountingLemma}. In particular, also note that \rref{OddLHSCORStateFirstOrtho}
is precisely half of \rref{OddLHSCORStateFirst} in the case of $n =2N+1$.

\vspace{3mm}

To reduce the right hand sides of \rref{DimensionUnevenHomsPropSymp} and \rref{DimensionUnevenHomsPropOrtho},
we need expressions for the group orders of finite orthogonal groups, finite
symplectic groups, and their parabolic quotients. First, recall that
\beg{GroupOrders}{\begin{array}{c}
\displaystyle |\text{O}_{2m+1} (\F_q)|= 2 q^{m^2} \prod_{i=1}^m (q^{2i}-1)\\
\displaystyle |\text{O}_{2m}^\pm (\F_q)|= 2q^{m(m-1)} (q^m \mp 1) \prod_{i=1}^{m-1} (q^{2i}-1) \\
\displaystyle |\text{Sp}_{2N} (\F_q)| = |\text{O}_{2N+1} (\F_q)|/2 = q^{N^2} \prod_{i=1}^N (q^{2i}-1)
\end{array}}
(recalling the notation that for $n =2m$ even, $\text{O}_{2m}^+ (\F_q)$ denotes the orthogonal group
$\text{O}(W,B)$ when $h_W = m$ and $\text{O}_{2m}^- (\F_q)$ denotes $\text{O}(W,B)$ when $h_W = m-1$).

To compute the orders of the parabolic quotients of orthogonal and symplectic groups first note that
$$|\text{O}(W, B)/P^B_{ k}| = \frac{|\text{O}(W, B)/P^B_{ 1}| \cdot |\text{O}(W[-1],B[-1])/P^{B[-1]}_{ k-1}|}{|\mathbb{P}^{k-1} (\mathbb{F}_q)|},$$
and therefore,
\beg{GORecursion}{|\text{O}(W, B)/P^B_{ k}| = \frac{\displaystyle \prod_{\ell = 1}^{k-1} |\text{O}(
W[-\ell], B[-\ell])/P^{B[-\ell]}_{ 1}|}{\displaystyle \prod_{\ell = 1}^{k-1} |\mathbb{P}^{\ell} (\mathbb{F}_q)|}.}
It is also well known that for any symmetric bilinear form $B$ on $\mathbb{F}_q^n$,
the number of elements in $\text{O}(B)/P^B_{ 1}$,
which is the set of points the quadric defined by $B$ in $\mathbb{P}^{n-1} (\mathbb{F}_q)$, is
\beg{PointsOfQuadricInPn-1}{|\text{O}(W,B)/P^B_{ 1}| =  \begin{cases}
\displaystyle \frac{q^{n-1}-1}{q-1} & n \text{ odd} \\
\\
\displaystyle \frac{q^{n-1}-1}{q-1}\pm q^{(n-2)/2} & \begin{array}{l}
n \text{ even, where}\\
 \text{O}(W,B) = \text{O}_n^\pm (\F_q).
\end{array}
\end{cases}}

\vspace{3mm}

Now the , and in fact, it turns out that
$$|\text{Sp}_{2N} (\F_q) / P_k^{\F_q^{2N}}| = |\text{O}_{2N+1} (\F_q)/ P_{k}^{\F_q^{2N+1}}|$$
(again this can be interpreted as caused by the fact that the symplectic groups are dual to the
special orthogonal groups). In particular, combined with \rref{GroupOrders},
this allows us to rewrite
the right hand side of \rref{DimensionUnevenHomsPropOrtho}
entirely in terms of orders of odd orthogonal groups and their parabolic quotients. In fact,
we find that this right hand side is precisely half of the right hand side.
Since we observed the same effect for the left hand side at the beginning of the proof,
we therefore find that \rref{DimensionUnevenHomsPropOrtho} is,
as an expression, exactly \rref{DimensionUnevenHomsPropSymp}
at $n = 2N+1$ (in which case $N= h_W$, giving the same range of $\ell$), multiplied by $1/2$.
Thus, it suffices to prove part \rref{SympPartUnevenProp} of the proposition.

\vspace{3mm}

\noindent {\bf Case 1:} $n$ is odd.
Let us write $n=2m +1$, in which case
$m$ must be the number of hyperbolics $h_W$ in $B$. In this case, for $k=0, \dots , m-\ell$, 
combining \rref{GORecursion} and \rref{PointsOfQuadricInPn-1},
$$\frac{|\text{O}(W,B)|}{|P^B_{k+\ell}|}= \frac{(q^{2m} -1) \cdot (q^{2(m-1)} -1) \dots (q^{2(m-k-\ell+1)}-1)}{(q^{k+\ell}-1) \cdot (q^{k+\ell-1} -1) \dots (q-1)}.$$
We may rewrite this as
$$\frac{|\text{O}(W,B)|}{|P^B_{k+\ell}|} = {m \choose k+\ell }_q \cdot \prod_{i=m-k-\ell+1}^m (q^i+1).$$
Similarly,
$$\frac{|\text{O}(W[-\ell],B[-\ell])|}{|P^{B[-\ell]}_{ k}|} = \frac{(q^{2(m-\ell)} -1) \dots (q^{2(m-k-\ell +1)} -1)}{(q^k -1) \dots (q-1)}.$$
Applying \rref{GroupOrders}, we also have
$$\begin{array}{c}
\displaystyle |\text{O}(W[-k-\ell],B[-k-\ell]) | = 2 q^{(m-k-\ell)^2} \prod_{j=1}^{m-k-\ell} (q^{2k} - 1) =\\
\vspace{1mm}
\displaystyle 2 q^{(m-k-\ell)^2} \prod_{j=1}^{m-k-\ell} (q^{j} - 1)(q^j+1) .
\end{array}$$

Combining this with \rref{OddLHSCORStateFirst}, the statement reduces to 
\beg{LiteralRestatementOddCase}{\begin{array}{c}
2Q_{2m-\ell} \dots Q_1 = \\[1ex]
\displaystyle \sum_{k = 0}^{m-\ell} 2q^{(m-k-\ell)^2} \cdot {m \choose k+\ell}_q \cdot
\prod_{j=m-k-\ell +1}^m Q_j  \cdot \frac{\displaystyle\prod_{i = 1}^{m-\ell} (q^i-1) \cdot Q_i}{(q^k -1) \dots (q-1)}
\end{array}}
We may divide both sides of \rref{LiteralRestatementOddCase} by $2Q_1 \dots Q_{m-\ell}$, giving
\beg{M+1To2MVersionOddCase}{\begin{array}{c}
Q_{2m-\ell} \dots Q_{m-\ell+1}= \\[1ex]
\displaystyle \sum_{\ell=0}^{m-k} q^{(m-k-\ell)^2} {m \choose k+\ell }_q \cdot
\prod_{i = m-k-\ell+1}^m Q_j \cdot \prod_{j=k +1}^{m-\ell } (q^j -1).
\end{array}}
Replacing
${m \choose k+\ell}_q = {m \choose m-k-\ell}_q,$
this follows exactly from Lemma \ref{CombinatorialLemma} by putting
$r= m$, $p = m-\ell$, $b =1$
and substituting $a = m-k-\ell$.

\vspace{3mm}

\noindent {\bf Case 2:} $n$ is even. Let us write $n=2m$.

{\em Case 2A:} Suppose $B$ decomposes completely into hyperbolics, i.e. $h_W = m$.
In this case, combining \rref{GORecursion} and \rref{PointsOfQuadricInPn-1}, noting first that we may first re-write this case of \rref{PointsOfQuadricInPn-1} as 
$$\begin{array}{c}
\displaystyle \frac{|\text{O}(W[-\ell],B[-\ell])|}{|P^{B[-\ell]}_{ 1}|} = \frac{q^{2m-2\ell -1} - q^{m-\ell+1} + q^{m-\ell} -1}{q-1} =\\ 
\displaystyle \frac{(q^{m-\ell}-1)(q^{m-\ell +1} +1)}{q-1}
\end{array},$$
we have 
$$\begin{array}{c}
\displaystyle \frac{|\text{O}(W,B)|}{|P^B_{ k+\ell}|} = \frac{\displaystyle \prod_{i = m-k-\ell }^{m-1} (q^{i+1} -1) \cdot Q_{i} }{(q^{k+\ell} -1) \cdot (q^{k+\ell -1} -1) \dots (q-1)} =\\
\\
\displaystyle  {m \choose k+\ell }_q \cdot \prod_{i = m-k-\ell }^{m-1} Q_i.
\end{array}$$
Similarly, we have
$$
\frac{|\text{O}(W[-\ell],B[-\ell])|}{|P^{B[-\ell]}_{ k}|} = \frac{ \displaystyle  \prod_{i = m-k-\ell}^{m-\ell-1} (q^{i+1}-1) Q_i  }{(q^{k }-1) \dots (q-1)}.$$

We also have
$$\begin{array}{c}
\displaystyle |\text{O} (W[-k-\ell], B[-k-\ell])| = \\
\\
\displaystyle 2 q^{(m-k-\ell) (m-k-\ell- 1)}\cdot (q^{m-k-\ell} -1) \prod_{j =1 }^{m-k-\ell-1} (q^{2j} -1) =\\
\\
\displaystyle 2 q^{(m-k-\ell) (m-k-\ell- 1)} \cdot (q^{m-k-\ell} -1) \prod_{j =1 }^{m-k-\ell-1} (q^{j} -1) (q^j +1).
\end{array}$$

Again, combining with \rref{OddLHSCORStateFirst}, we can reduce the claimto
\beg{SplitCaseFirstWriteOutDimsEven}{\begin{array}{c}
2 Q_{2m-\ell-1} \dots Q_1=\\[1ex]
\resizebox{0.85\textwidth}{!}{$\displaystyle \sum_{k=0}^{m-\ell} q^{(m-k-\ell) (m-k-\ell -1)} {m \choose k+\ell}_q 
\prod_{i= m-k -\ell}^{m-1} Q_i \frac{\displaystyle (q^{m-\ell}-1) \prod_{j=1}^{m-\ell -1}(q^j -1) Q_j}{(q^k -1) \dots (q-1)}$}
\end{array}}
Again, we may divide both
sides of \rref{SplitCaseFirstWriteOutDimsEven} by $2 Q_{m-\ell-1} \dots Q_1$, giving
\beg{Qm+1DotsQ2m-1+1DimsEvenSplit}{\begin{array}{c}
Q_{2m-\ell-1} \dots Q_{m-\ell} =\\[1ex] 
\displaystyle \sum_{k=0}^{m-\ell}  q^{(m-k-\ell) (m-k-\ell-1)} \cdot {m \choose k+\ell}_q 
\cdot \prod_{i = m-k-\ell}^{m-1} Q_i \cdot \prod_{j=k +1}^{m-\ell} (q^j -1).
\end{array}}
Rewriting ${m \choose k + \ell}_q = {m \choose m-k-\ell}_q$, this statement follows directly from applying Lemma \ref{CombinatorialLemma} to
$ r = m-1$, $p = m-\ell $, $b= 0$,
and substituting $a = m-k-\ell$.

\vspace{3mm}

{\em Case 2B:} Suppose $B$ has a non-trivial anisotropic part, i.e. $h_W = m -1$.
In this case, combining \rref{GORecursion} and \rref{PointsOfQuadricInPn-1}, noting first that we may first re-write this case of \rref{PointsOfQuadricInPn-1} as 
$$\begin{array}{c}
\displaystyle \frac{|\text{O}(W[-\ell],B[-\ell])|}{|P^{B[-\ell]}_{ 1}|}= \frac{q^{2m-2\ell -1} + q^{m-\ell+1} - q^{m-\ell} -1}{q-1} =\\ 
\displaystyle \frac{(q^{m-\ell} +1)(q^{m-\ell +1} - 1)}{q-1},
\end{array}$$
we have 
$$\begin{array}{c}
\displaystyle \frac{|\text{O}(W,B)|}{|P^B_{ k+\ell}|}= \frac{\displaystyle \prod_{i = m-k-\ell +1 }^{m} (q^{i-1} -1) \cdot Q_{i} }{(q^{k+\ell} -1) \cdot (q^{k+\ell -1} -1) \dots (q-1)} =\\
\\
\displaystyle  {m -1 \choose k+\ell }_q \cdot \prod_{i = m-k-\ell+1 }^{m} Q_i.
\end{array}$$
Similarly, we have
$$
\frac{|\text{O}(W[-\ell],B[-\ell])|}{|P^{B[-\ell]}_{k}|} = \frac{ \displaystyle  \prod_{i = m-k-\ell +1}^{m-\ell} (q^{i-1}-1) Q_i  }{(q^{k }-1) \dots (q-1)}.$$

We also have
$$\begin{array}{c}
\displaystyle |\text{O} (W[-k-\ell],B[-k-\ell])| = \\
\vspace{1mm}
\displaystyle 2 q^{(m-k-\ell) (m-k-\ell- 1)}\cdot (q^{m-k-\ell} +1) \prod_{j =1 }^{m-k-\ell-1} (q^{2j} -1) =\\
\vspace{1mm}\displaystyle 2 q^{(m-k-\ell) (m-k-\ell- 1)} \cdot (q^{m-k-\ell} +1) \prod_{j =1 }^{m-k-\ell-1} (q^{j} -1) (q^j +1).
\end{array}$$

Again, combining with \rref{OddLHSCORStateFirst}, we can reduce the statement
to
\beg{SplitCaseFirstWriteOutDimsEven}{\resizebox{0.9\textwidth}{!}{$\begin{array}{c}
2 Q_{2m-\ell -1} \dots Q_1=\\[1ex]
\displaystyle \sum_{k=0}^{m-\ell -1} q^{(m-k-\ell) (m-k-\ell -1)} \cdot {m-1 \choose k+\ell}_q 
\prod_{i= m-k -\ell +1}^{m} Q_i \frac{\displaystyle Q_{m-\ell } \prod_{j=1}^{m-\ell -1}(q^j -1) \cdot Q_j}{(q^k -1) \dots (q-1)}
\end{array}$}}
Similarly as in the previous cases, we may divide both
sides of \rref{SplitCaseFirstWriteOutDimsEven} by $2 Q_{m-\ell } \dots Q_1$, giving
\beg{Qm+1DotsQ2m-1+1DimsEvenSplit}{\begin{array}{c}
Q_{2m-\ell-1} \dots Q_{m-\ell +1} =\\[1ex] 
\resizebox{0.85\textwidth}{!}{$\displaystyle \sum_{k =0}^{m-\ell-1}  q^{(m-k-\ell) (m-k-\ell-1)} \cdot {m -1 \choose k+\ell}_q 
\cdot \prod_{i = m-k-\ell +1}^{m} Q_i \cdot \prod_{j=k +1}^{m-\ell-1} (q^j -1).$}
\end{array}}
Rewriting ${m-1 \choose k + \ell}_q = {m -1\choose m-k-\ell -1}_q$, this statement follows directly from applying Lemma \ref{CombinatorialLemma} to
$ r = m $, $p = m-\ell -1 $, $b= 2$,
and substituting $a = m-k-\ell -1$.

\end{proof}

\vspace{5mm}

\subsection{The proof of Lemma \ref{CombinatorialLemma}}\label{CombinLemmaProofSubsect}

The proof of Lemma \ref{CombinatorialLemma} proceeds by induction.
The argument is perhaps slightly unusual due to the fact that our formula does
not reduce well at $q \rightarrow 1$ and is therefore not a ``quantization" of a
classical formula.

\begin{proof}[Proof of Lemma \ref{CombinatorialLemma}]
To prove \rref{CombLemmaGenStat}, we begin by rewriting
\beg{FirstStepCombLemma}{\begin{array}{c}
Q_{r+p} \dots Q_{b + p} =\\[1ex]
\displaystyle Q_r \dots Q_b + \sum_{k = b}^{r} (Q_{p+k} - Q_{k}) \cdot
\prod_{j=k+1}^{r} Q_j \cdot \prod_{j' =b}^{k-1} Q_{j' +p}. 
\end{array}}

Now, for each $b \leq k \leq r$, we have
$$Q_{p+k} - Q_k = q^k \cdot (q^p -1),$$
so we may rewrite \rref{FirstStepCombLemma} as 
\beg{Step0CombinArguLemma}{\begin{array}{c}
Q_{r+p } \dots Q_{b+p} = \\[1ex]
\displaystyle
Q_r  \dots Q_b + \sum_{k=b}^{r} q^k (q^p -1) \prod_{j=k+1}^{r} Q_j \cdot \prod_{j' =b}^{k-1} Q_{j' +p}. 
\end{array}}

Our goal is now to process each term of the right hand side of \rref{Step0CombinArguLemma}
to convert, step by step, the highest appearing $Q_{j' + p}$ factor into the
next smallest $Q_j$ not yet appearing. In the statement of Lemma \ref{CombinatorialLemma},
all terms consist of multiples of products of the form
$$Q_r \cdot Q_{r-1} \dots Q_{b+a+1} \cdot Q_{b+a},$$
so we cannot skip any $Q_j$'s in the process. We make the following

\vspace{5mm}

\begin{claim}\label{ClaimCombinLemmaTranche}
For $b \leq k \leq r$, we have
\beg{TrancheTermsCombinArguLemma}{
\begin{array}{c}
\displaystyle q^k (q^p -1) \prod_{j = k+1}^r Q_j \cdot \prod_{j' = b}^{k-1} Q_{j'+p}=\\
\\
\displaystyle \sum_{a=1}^{k-b+1} \left( \sum_{a+b-1 \leq \ell_1 \leq \dots \leq \ell_a = k} q^{\ell_1 + \dots + \ell_a} \right)
\cdot  \prod_{i = p-a+1}^p (q^i -1) \cdot \prod_{j = b+a}^r Q_j
\end{array}}
\end{claim}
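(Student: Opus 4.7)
My plan is to prove the claim by induction on $k - b \geq 0$, treating $p$ and $r$ as free parameters that may vary between inductive steps. The base case $k = b$ is immediate: the product $\prod_{j'=b}^{k-1} Q_{j'+p}$ on the left is empty, and only the $a = 1$ summand on the right survives, reducing the identity to $q^b(q^p-1) \prod_{j=b+1}^r Q_j$ on both sides.

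For the inductive step ($k > b$), I would first apply the elementary telescoping identity $Q_{k-1+p} = Q_k + q^k(q^{p-1}-1)$, which follows from $Q_m - Q_n = q^m - q^n$, to the top factor in $\prod_{j'=b}^{k-1} Q_{j'+p}$. This decomposes the left-hand side as $A + B$, where $A = q \cdot LHS_{k-1,\,b,\,p,\,r}$ and
$$B = q^{2k}(q^p-1)(q^{p-1}-1) \prod_{j=k+1}^r Q_j \prod_{j'=b}^{k-2} Q_{j'+p}.$$
The piece $A$ is immediately handled by the inductive hypothesis applied at $(k-1, b, p, r)$. The principal obstacle will be the piece $B$, which is not manifestly a smaller instance of the claim. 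The key observation is the reindexing identity
$$\prod_{j'=b+1}^{k-1} Q_{j'+(p-1)} = \prod_{j'=b}^{k-2} Q_{j'+p},$$
which reveals that $B = q^k(q^p-1) \cdot LHS_{k,\,b+1,\,p-1,\,r}$; the inductive hypothesis then applies at $(k, b+1, p-1, r)$, for which $k - (b+1) = k - b - 1$ has also strictly decreased.

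Recombining the two expansions and re-indexing the $B$-contribution, the coefficient of $\prod_{i=p-a+1}^p (q^i-1) \prod_{j=b+a}^r Q_j$ in the total sum becomes
$$q \cdot S(a, b, k-1) + q^k \cdot S(a-1, b+1, k),$$
where I set $S(a, b, k) := \sum_{a+b-1 \leq \ell_1 \leq \cdots \leq \ell_a = k} q^{\ell_1 + \cdots + \ell_a}$ with the convention $S(0, b', k) := 0$. The identity is thereby reduced to the combinatorial recursion
$$S(a, b, k) = q \cdot S(a, b, k-1) + q^k \cdot S(a-1, b+1, k),$$
which I would verify by splitting the defining sum according to whether $\ell_{a-1} = k$ (yielding the second summand, since $(a-1) + (b+1) - 1 = a + b - 1$ preserves the lower bound on $\ell_1$) or $\ell_{a-1} < k$ (yielding the first summand, via the weight-shifting bijection $\ell_a = k \leftrightarrow \ell_a = k-1$). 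Once the reindexing identification of $B$ is in hand, the rest of the argument is routine bookkeeping; spotting that reindexing is the one technical hurdle in the induction.
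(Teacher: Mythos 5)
Your proof is correct, and it uses the same elementary telescoping identity $Q_{m+1} = Q_m + q^m(q-1)$, or rather $Q_{k-1+p} = Q_k + q^k(q^{p-1}-1)$, that drives the paper's argument. The organization of the induction, however, is genuinely different and arguably cleaner. The paper runs the reduction iteratively: it replaces the highest $Q_{j'+p}$ factor by $Q_j + (\text{error})$, tracks the full state of the expansion after $n$ such steps via an explicit intermediate formula (its equation \rref{nthinductivestepCombinArguL}), and verifies the inductive step by expanding each term of that state. You instead apply the telescoping just once, and the crux of your argument — which the paper does not exploit — is the reindexing identity $\prod_{j'=b+1}^{k-1} Q_{j'+(p-1)} = \prod_{j'=b}^{k-2} Q_{j'+p}$, which exposes the remainder piece $B$ as a scaled instance of the \emph{same} claim at shifted parameters $(k,b+1,p-1,r)$, so that both pieces $A$ and $B$ fall under the inductive hypothesis at strictly smaller $k-b$. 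This collapses the bookkeeping to the single three-term recursion $S(a,b,k)=q\,S(a,b,k-1)+q^k\,S(a-1,b+1,k)$, whose bijective verification (split on $\ell_{a-1}=k$ versus $\ell_{a-1}<k$, shifting the top index in the latter case) is transparent. The paper's route makes the intermediate states visible and traces exactly how each $Q_{j'+p}$ factor converts, which is useful if one wants to see the Schubert-cell-like structure emerge step by step; your route is shorter and hides that process, but buys a cleaner structural induction and a self-contained combinatorial recursion. Both are valid; I'd flag one small point for the write-up: when applying the inductive hypothesis at $(k,b+1,p-1,r)$ you should note that the hypothesis $b+1 \leq k$ is guaranteed because you are in the case $k>b$, and that $r > b+1$ may fail when $r = b+1$ — but in that case $k=b+1$ and the $B$-piece is the empty-product degenerate case, so nothing breaks.
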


\vspace{5mm}

\begin{proof}[Proof of Claim \ref{ClaimCombinLemmaTranche}]
We will proceed inductively, step by step, converting each factor $Q_{j' +p}$ in a term
of the previous step's reduction of \rref{TrancheTermsCombinArguLemma}, starting 
with the largest appearing $j'$,
into a sum of the next lower $Q_j$ not yet appearing, with
the appropriate error term of $q^j$ multiplied by
a factor $(q^{j'+p-j} -1)$. This process will terminate
in $k-b$ steps (we are already done when $k = b$).

\vspace{5mm}

The induction hypothesis is that after $n$ steps, we will have reduced
\rref{TrancheTermsCombinArguLemma} to
\beg{nthinductivestepCombinArguL}{\resizebox{0.9\textwidth}{!}{$\displaystyle
\sum_{a=1}^{n+1} \left( \sum_{a-n+k-1 \leq \ell_1 \leq \dots \leq \ell_a = k} q^{\ell_1 + \dots + \ell_a} \right)
\cdot  \prod_{i = p-a+1}^p (q^i -1) \cdot \prod_{j = k -n +a }^r Q_j \prod_{j' = b}^{k - n-1} Q_{j' +p}$}}

\vspace{5mm}

Let us describe the first step of this process
for \rref{TrancheTermsCombinArguLemma}. The largest appearing $j'$ is $j' = k-1$. 
The next lower $Q_j$ factor not yet appearing is for $j = k$. Therefore, this step
uses the replacement
$$Q_{k-1+p} = Q_k + q^k (q^{p-1} -1).$$
This gives
$$
\begin{array}{c}
\displaystyle q^k (q^p-1) \prod_{j = k}^r Q_j \cdot \prod_{j' = b}^{k-2} Q_{j'+p} +\\
\\
\displaystyle q^{k+ k} (q^p-1) (q^{p-1}-1)  \prod_{j = k+1}^r Q_j \cdot \prod_{j' = b}^{k-2} Q_{j'+p} 
\end{array}
$$
proving \rref{nthinductivestepCombinArguL} at $n = 1$.

\vspace{5mm}

Suppose \rref{nthinductivestepCombinArguL} holds at Step $n$. We now need to 
perform Step $(n+1)$. For $1 \leq a \leq n+1$, consider the term
\beg{SingleTermOfStepnCombinArgu}{\resizebox{0.9\textwidth}{!}{$\displaystyle \left( \sum_{a-n+k-1 \leq \ell_1 \leq \dots \leq \ell_a = k} q^{\ell_1 + \dots + \ell_a} \right)
\cdot  \prod_{i = p-a+1}^p (q^i -1) \cdot \prod_{j = k -n +a }^r Q_j \cdot \prod_{j' = b}^{k - n-1} Q_{j' +p},
$}}
of \rref{nthinductivestepCombinArguL}.

\vspace{3mm}

The highest occuring $Q_{j'+p}$ is at $j' = k-n-1$.
The next lower $Q_j$ factor not yet appearing is for $j = k-(n+1)+a$. Therefore, in this term, we must use the replacement
$$Q_{k-n-1 +p} = Q_{k-(n+1)+a} + q^{k-(n+1) +a} \cdot (q^{p-a} -1).$$
This reduces \rref{SingleTermOfStepnCombinArgu} to
\beg{SingleInductiveTermNToN+1CombL}{\resizebox{0.9\textwidth}{!}{$\begin{array}{c}
\displaystyle \left( \sum_{a-n+k-1 \leq \ell_1 \leq \dots \leq \ell_a = k} q^{\ell_1 + \dots + \ell_a} \right)
\cdot  \prod_{i = p-a+1}^p (q^i -1) \cdot \prod_{j = k -(n+1) +a }^r Q_j \cdot \prod_{j' = b}^{k - n-2} Q_{j' +p}+\\
\\
+\displaystyle \left( \sum_{a-n+k-1 = \ell_0 \leq \ell_1 \leq \dots \leq \ell_a = k} q^{\ell_0+ \ell_1 + \dots + \ell_a} \right)
\cdot  \prod_{i = p-a}^p (q^i -1) \cdot \prod_{j = k -n +a }^r Q_j \cdot \prod_{j' = b}^{k - n-2} Q_{j' +p}
\end{array}$}}

These terms appear in the $(n+1)$th inductive step; the first one occurs in
the expression \rref{nthinductivestepCombinArguL} with $n$ replaced by $n+1$ with
no reindexing of $a$ or $\ell_i$, and the second one occurs after replacing
$a$ by $a+1$ and shifting $\ell_{0} \leq \dots \leq \ell_a$ to $\ell_{1}\leq \dots \leq \ell_{a+1}$.

Therefore, we may proceed inductively, and at Step $n = k-b$, we obtain the reduction \rref{TrancheTermsCombinArguLemma}.

\end{proof}

\vspace{5mm}

Recombining the terms \rref{TrancheTermsCombinArguLemma}
according to \rref{Step0CombinArguLemma}, we get
\beg{AlmostExpCompLemmaStat}{\begin{array}{c}
Q_{r+p} \dots Q_{b + p} =\\[1ex]
\displaystyle 
\sum_{a = 0}^p  \left(\sum_{a+b -1 \leq \ell_1 \leq \dots \leq \ell_a \leq r} q^{\ell_1 + \dots + \ell_a}\right) \cdot
\prod_{i = p -a+1}^p (q^k -1) \cdot \prod_{j= b+a}^r Q_j 
\end{array}
}
(note that the $a= 0$ term arises from the single term $Q_r \dots Q_b$ in \rref{Step0CombinArguLemma}).

\vspace{3mm}

Finally, we compute
$$\begin{array}{c}
\displaystyle \sum_{a+b -1 \leq \ell_1 \leq \dots \leq \ell_a \leq r} q^{\ell_1 + \dots + \ell_a} = 
 q^{a(a+b-1)} \sum_{0 \leq \ell_1 \leq \dots \leq \ell_a \leq r-a-b+1} q^{\ell_1+ \dots + \ell_q}= \\
\\
\displaystyle q^{a(a+b-1)} \cdot {r-b+1 \choose a}_q,
\end{array}$$
by the Gaussian binomial coefficient theorem. Plugging this into \rref{AlmostExpCompLemmaStat}
gives \rref{CombLemmaGenStat}.

\end{proof}

\vspace{5mm}

\section{The proof of Theorem \ref{TheoremGenuine}}\label{ProofSection}

In this section, we conclude the proof of Theorem \ref{TheoremGenuine}
using an inductive argument that proceeds completely similarly in both stable ranges. 

\begin{proof}
We begin with \rref{SymTheoremGenuine}. Fix a symplectic space $V$.
We proceed by induction on $h_W$. Suppose the statement of \rref{SympStableEndDecompThm}
holds for every choice of $(W,B)$ with $h_W < m$. Now
consider a choice of $(W,B)$ with $h_W=m$.

For every $1 \leq k \leq m$, the induction hypothesis applies
to $\omega [ V\otimes W[-k]]$. Write, for $1 \leq \ell \leq m$,
$$Z_\ell = \bigoplus_{\rho \in \widehat{\text{O} (W[-\ell], B[-\ell])}} \eta^V_{W[-\ell], B[-\ell]} (\rho)
\otimes \rho$$
as an $\text{Sp}(V) \times \text{O}(W[-\ell], B[-\ell])$-representation. We have
$$\C \text{O}(W[-\ell], B[-\ell]) = \text{End}_{\text{Sp}(V)} (Z_\ell) = \text{End}_{\text{Sp}(V)}^{top} (\omega [ V\otimes W[-\ell]]),$$
and
$$\text{Res}_{\text{Sp}(V)} (\omega [V\otimes W[-\ell]]) \cong \bigoplus_{k=0}^{h_W-\ell} \frac{|\text{O}(W[-\ell],B[-\ell])|}{|P_{k}^{B[-\ell]}|}
\text{Res}_{\text{Sp}(V)} (Z_{k+\ell}).$$

First, we claim that as $\text{Sp}(V)$-representations, a sum of $|\text{O}(W,B)/P_\ell^B|$ copies
of $\text{Res}_{\text{Sp}(V)} (Z_\ell)$ appears in $\text{Res}_{\text{Sp}(V)} (\omega [ V\otimes W])$.
Now this holds since by the induction hypothesis, we have
\beg{UnEvenForFinalIndSymp}{\begin{array}{c}
\text{Hom}_{\text{Sp}(V)} (\omega[ V\otimes W[-\ell]], \omega [ V\otimes W])= \\[1ex]
\displaystyle \bigoplus_{k=0}^{h_W-\ell} \frac{|\text{O}(W[-\ell], B[-\ell])|}{|P^{B[-\ell]}_k|} \cdot
\text{Hom}_{\text{Sp}(V)} ( Z_{k+\ell}, \omega [ V\otimes W]).
\end{array}}
Applying Proposition \ref{HomUevenDimsCor}, the dimension of the left hand side of
\rref{UnEvenForFinalIndSymp} can be re-expressed
as
$$\sum_{k=0}^{h_W -\ell} \frac{|\text{O} (W[-\ell], B[-\ell])|}{|P_k^{B[-\ell]}|} \cdot
\frac{|\text{O}(W,B)|}{|P_{k+\ell}^B|} \cdot |\text{O} (W[-k-\ell], B[-k-\ell])|$$
giving a linear system of equations for the dimensions of the $Hom$-spaces
$\text{Hom}_{\text{Sp} (V)} (Z_{k+\ell}, \omega [V\otimes W])$, giving
$$\text{dim} (\text{Hom}_{\text{Sp} (V)} (Z_\ell, \omega [ V\otimes W])) = \frac{|\text{O}(W,B)|}{|P_{\ell}^B|}
|\text{O}(W[-\ell], B[-\ell])|.$$
This gives the claim since this $\text{Hom}$-space is a free module over
the endomorhism algebra
$\text{End}_{\text{Sp}(V)} (Z_k) = \C \text{O}(W[-k], B[-k])$, and we can
consider
$$\text{Hom}_{\text{Sp}(V)\times \text{O}(W[-k], B[-k])} (Z_k, \omega [V\otimes W]).$$
Since we have
\beg{RestrictSlightly}{\text{Res}_{\text{Sp}(V)\times \text{O}(W[-k],B[-k])} (\omega [V\otimes W] )
= \omega [ V\otimes W[-k]] \otimes (\C V)^{\otimes k},}
where $\text{O}(W[-k], B[-k])$ acts trivially on the $\C V$ factors,
the action of $\text{O}(W[-k], B[-k])$ on each copy of $Z_k$ is preserved and therefore, $Z_k$
occurs as a summand of \rref{RestrictSlightly} with mutliplicity $|\text{O}(W,B)/P_k^B|$.
Now, since we know that
$$\text{Res}_{\text{GL}_N (\F_q)} (\omega [ V\otimes W]) = (\C \F_q^N) \otimes\epsilon (\text{det}),$$
we get, by adjunction, summands
$$\bigoplus_{k=1}^{h_W} \bigoplus_{\rho \in \widehat{\text{O}(W[-k], B[-k])}} 
\eta^V_{W[-k], B[-k]} (\rho) \otimes \text{Ind}_{P_k^B} (\rho\otimes \epsilon (\text{det})),$$
as in the claimed expression \rref{EtaCorrThmDecomp}.
From the perspective of endomorphism algebras, we have
$$\bigoplus_{k=1}^{h_W} M_{|\text{O}(W,B)/P_k^B|} (\C \text{O}(W[-k], B[-k])) \subseteq \text{End}_{\text{Sp}(V)} (\omega [ V\otimes W]),$$
all occuring independently from the top part, since they specifically all arise from
terms appearing in $\text{End}_{\text{Sp}(V)} (\omega [V\otimes W[-k]])$.
Combining with Proposition \ref{GroupInTop} gives the $k=0$ term as well:
$$\bigoplus_{k=0}^{h_W} M_{|\text{O}(W,B)/P_k^B|} (\C \text{O}(W[-k], B[-k])) \subseteq \text{End}_{\text{Sp}(V)} (\omega [ V\otimes W]).$$
This must be an equality, however, since the dimensions match by Corollary \ref{Combinatoricsdimmatchell=0topfinal}.

\vspace{5mm}

Similarly, to prove part \rref{OrthoTheoremGenuine}, we fix $(W,B)$.
and proceed by induction on $N$. Suppose the statement of \rref{OrthoStableEndDecompThm}
holds for every symplectic space $V$ of dimension $2M$ with $M < N$. Now
consider a symplectic space $V$ of dimension $2N$.

For every $1 \leq k \leq N$, the induction hypothesis applies
to $\omega [ V[-k]\otimes W ]$. Write, for $1 \leq \ell \leq m$,
$$Y_\ell = \bigoplus_{\rho \in \widehat{\text{Sp}(V[-\ell])}}  \rho \otimes
\zeta^{W,B}_{V[-\ell]} (\rho)$$
as an $\text{Sp}(V[-\ell]) \times \text{O}(W, B)$-representation. We have
$$\C \text{Sp}(V[-\ell]) = \text{End}_{\text{O}(W,B)} (Y_\ell) = \text{End}_{\text{O}(W,B)}^{top} (\omega [ V[-\ell]\otimes W]),$$
and
$$\text{Res}_{\text{O}(W,B)} (\omega [V[-\ell]\otimes W ]) \cong \bigoplus_{k=0}^{h_W-\ell} \frac{|\text{Sp}(V[-\ell])|}{|P_{k}^{V[-\ell]}|}
\text{Res}_{\text{O}(W,B)} (Y_{k+\ell}).$$
Again, we begin by arguing that is $O(W,B)$-representations, a sum of
$|\text{Sp}(V)/P_\ell^V|$ copies
of $\text{Res}_{\text{O}(W,B)} (Y_\ell)$ appears in $\text{Res}_{\text{O}(W,B)} (\omega [ V\otimes W])$,
which holds by applying the induction hypothesis to get
\beg{OrthoFinalIndStepLinearSyst}{\begin{array}{c}
\text{Hom}_{\text{O}(W,B)} (\omega[ V[-\ell]\otimes W], \omega [ V\otimes W])= \\[1ex]
\displaystyle \bigoplus_{k=0}^{h_W-\ell} \frac{|\text{Sp}(V[-\ell])|}{|P^{V[-\ell]}_k|} \cdot
\text{Hom}_{\text{O}(W,B)} ( Y_{k+\ell}, \omega [ V\otimes W]).
\end{array}}
Applying Proposition \ref{HomUevenDimsCor}, the dimension of the left hand side of \rref{OrthoFinalIndStepLinearSyst}
can be re-expressed as
$$\sum_{k=0}^{h_W -\ell} \frac{|\text{Sp} (V[-\ell])|}{|P_k^{V[-\ell]}|} \cdot
\frac{|\text{Sp}(V)|}{|P_{k+\ell}^V} \cdot |\text{Sp}(V[-k-\ell])|$$
giving a linear system of equations for the dimensions of the $\text{Hom}$-spaces
$\text{Hom}_{\text{O} (W,B)} (Y_{k+\ell}, \omega [V\otimes W])$, giving
$$\text{dim} (\text{Hom}_{\text{O}(W,B)} (Y_\ell, \omega [ V\otimes W])) = \frac{|\text{Sp}(V)|}{|P_{\ell}^V|}|\text{Sp}(V[-\ell])|.$$
This gives the claim since this $\text{Hom}$-space is a free module over the endomorphism algebra
$\text{End}_{\text{O}(W,B)} (Y_k) = \C \text{Sp}(V[-k])$, and we can consider
$$\text{Hom}_{\text{Sp}(V[-k])\times \text{O}(W,B)} (Y_k, \omega [V\otimes W]).$$
Since we have
\beg{RestrictSlightly}{\text{Res}_{\text{Sp}(V[-k])\times \text{O}(W,B)} (\omega [V\otimes W] )
= \omega [ V[-k]\otimes W] \otimes (\C W)^{\otimes k},}
where $\text{Sp}(V[-k])$ acts trivially on the $\C W$ factors,
the action of $\text{Sp}(V[-k])$ on each copy of $Y_k$ is preserved and therefore, $Y_k$
occurs as a summand of \rref{RestrictSlightly} with mutliplicity $|\text{Sp}(V)/P_k^V|$.
Again, by adjunction, we get summands
$$\bigoplus_{k=1}^{N} \bigoplus_{\rho \in \widehat{\text{Sp}(V[-k])}} 
\text{Ind}_{P_k^V} (\rho\otimes \epsilon (\text{det}))\otimes \zeta^{W, B}_{V[-k]} (\rho) ,$$
as in the claimed expression \rref{ZetaCorrThmDecomp}.
From the perspective of endomorphism algebras, we have
$$\bigoplus_{k=1}^{N} M_{|\text{Sp}(V)/P_k^V|} (\C \text{Sp}(V[-k])) \subseteq \text{End}_{\text{O}(W,B)} (\omega [ V\otimes W]),$$
all occuring independently from the top part, since they specifically all arise from
terms appearing in $\text{End}_{\text{O}(W,B)} (\omega [V[-k]\otimes W])$.
As before, combining with Proposition \ref{GroupInTop} gives that, in fact,
$$\bigoplus_{k=0}^{N} M_{|\text{Sp}(V)/P_k^V|} (\C \text{Sp}(V[-k])) \subseteq \text{End}_{\text{O}(W,B)} (\omega [ V\otimes W])$$
This must be an equality, again, since the dimensions match by Corollary \ref{Combinatoricsdimmatchell=0topfinal}.
\end{proof}

\appendix

\section{An Explicit Composition Computation}

\vspace{5mm}
Fix $t\in  \F_q^\times$.
In this appendix, we return to the context
of $2$-dimensional symplectic spaces, and verify a relation between the endomorphisms
of the oscillator representation implied by their correspondence with elements of the group algebra
$\C \text{SL}_2 (\F_q)$.
Now that we have calculated the elements of $(\C \F_q^2 \otimes W)^{\text{O}(W,B)}$ corresponding
to the usual generators of $SL_2 (\F_q)$, they must.
Fix a scalar $t\in  \F_q^\times$. We have the following relation of matrices
$$ \begin{pmatrix}
0 & t\\
-1/t & 2
\end{pmatrix} \cdot
\begin{pmatrix}
t & 0\\
0 & t^{-1}
\end{pmatrix}=
\begin{pmatrix}
1 & 0\\
2/t & 1
\end{pmatrix}\cdot \begin{pmatrix}
0 & 1\\
-1 & 0
\end{pmatrix}$$
in $SL_2 (\F_q)$.
The purpose of this appendix is to complete the calculation that
\beg{galphaisgammabeta}{ g_t \star \alpha_t = \gamma_{2/t} \star \beta.}
The composition $g_t \star \alpha_t$ is 
$\frac{\epsilon(t)^2}{q^n (-q)^{n/2}} = \frac{1}{q^n (-q)^{n/2}}$ times the sum over all choices of
$y^+, y^-, z \in W$ of terms
\beg{galphawriteout}{
\psi (\frac{t}{2} B(z,z) - \frac{t+1}{2(t-1)} B (y^+ , y^-)) \cdot (z, tz) \star (y^+, y^-).
}
Writing out
$$(z, tz) \star (y^+, y^-) = \psi (\frac{1}{2} (B (z, y^-) - t \cdot B (z, y^+))),$$
each term \rref{galphawriteout} can be simplified to
the pair of vector $(y^+ + z, y^- + tz)$ multiplied by the coefficient
$$\psi (\frac{t}{2} B (z,z) -\frac{t+1}{2 (t-1)} B (y^+, y^-) + \frac{1}{2} B (z, y^-) - \frac{t}{2} B (z, y^+)).$$
By considering
$$ -\frac{t+1}{2 (t-1)} = \frac{1}{2} - \frac{t}{t-1}, \hspace{10mm} -\frac{t}{2} = \frac{t}{2}-t,$$
this can be rewritten as
$$\psi (\frac{1}{2} B (y^+ + z,  y^- + tz) - \frac{t}{t-1} B (y^+, y^- + (a-1)z)).$$
Substituting $u = y^+ + z$, $v = y^- + tz$ gives
$$\psi ( \frac{1}{2} B (u,v) - \frac{t}{t-1} B (u-z, v-z)).$$
Therefore, we have reduced $g_t \star \alpha_t$ to
\beg{galphaalmostdone}{
\frac{1}{q^n (-q)^{n/2}} \sum_{z,u,v \in W} \psi (\frac{1}{2} B (u,v) - \frac{t}{t-1} B (u-z, v-z)) \cdot (u,v).
}

Writing
$$B (u-z, v-z) = B (u,v) - B (u+ v, z) + B (z,z),$$
we may ``complete the square" by noticing that
$$\begin{array}{c}
- B (u+v, z) + B(z,z) =\\[1ex]
\displaystyle B (z - \frac{u+v}{2}, z- \frac{u+v}{2}) - B (\frac{u+v}{2}, \frac{u+v}{2}).
\end{array}$$
Substituting variables using $w = z- (u+v)/2$, putting the terms together, we get
$$\begin{array}{c}
\displaystyle B (u-z, v-z) = B(u,v) + B(w,w) - B (\frac{u+v}{2}, \frac{u+v}{2}) = \\
\\
\displaystyle B (w,w) - B (\frac{u-v}{2}, \frac{u-v}{2}).
\end{array}$$

Therefore, \rref{galphaalmostdone}
reduces to
\beg{galphafinal}{
\begin{array}{c}
g_t \star \alpha_t = \\[1ex]
\resizebox{0.85\textwidth}{!}{$\displaystyle\frac{1}{q^n (-q)^{n/2}}\sum_{w, u,v \in W} \psi (\frac{1}{2} B(u,v) - \frac{t}{t-1} (B(w,w) - B (\frac{u-v}{2}, \frac{u-v}{2}))) \cdot (u,v) =$}\\
\\
\resizebox{0.85\textwidth}{!}{$\displaystyle
\frac{K (-t/ (t-1))}{q^n (-q)^{n/2}} \sum_{u,v \in W} \psi (\frac{1}{2} B (u,v) + \frac{t}{t-1} B (\frac{u-v}{2}, \frac{u-v}{2})) \cdot (u,v).$}
\end{array}
}

\vspace{5mm}

Now let us consider the other side of \rref{galphaisgammabeta}.
The composition $\gamma_{2/t} \star \beta$ is $\frac{1}{ (-q)^{n/2} K (-t/4) K(1)}$ 
times the sum over all choices of $y^+, y^-, z\in W$ of terms
$$
\psi (-\frac{t}{4} B (z,z) + \frac{1}{4} (B (y^+, y^+) + B (y^-, y^-))) \cdot (z, 0 ) \star (y^+, y^-)
$$
Writing out
$$(z, 0 ) \star (y^+, y^-) = \psi (\frac{1}{2} B (z, y^-)) \cdot (z + y^+, y^-),$$
this term is the pair of vectors $(z+ y^+, y^-)$ multiplied by the coefficient
$$\begin{array}{c}
\displaystyle
 \psi (-\frac{t}{4}B (z,z) +\frac{1}{4} (B (y^+, y^+) + B (y^-, y^-)) +\frac{1}{2} B (z, y^-)) = \\
\\
\displaystyle
\psi (-t \cdot B(\frac{z}{2},\frac{z}{2}) +B (\frac{y^+ - y^-}{2}, \frac{y^+ - y^-}{2}) + \frac{1}{2} B(z+ y^+, y^-) )
\end{array}$$
Substituting variables $u = z+ y^+$, $v = y^-$, $w= z/2$ we reduce $\gamma_{2/t} \star \beta$ to
the coefficient
$\frac{1}{ (-q)^{n/2} K (-t/4) K(1)}$ times
$$
\sum_{ u, v, w\in W } \psi (-t B (w,w) +  B (\frac{u-v}{2}-w, \frac{u-v}{2}-w) + \frac{1}{2} B(u,v)) \cdot (u,v).
$$ 
Writing
$$\begin{array}{c}
\displaystyle B (\frac{u-v}{2}-w, \frac{u-v}{2}-w) = \\
\\
\displaystyle B (\frac{u-v}{2}, \frac{u-v}{2})-  2 \; B (\frac{u-v}{2}, w) + B (w,w),
\end{array}$$
we have
$$\begin{array}{c}
\displaystyle
-t B(w,w) +   B (\frac{u-v}{2}-w, \frac{u-v}{2} -w) =\\
\\
\displaystyle
-(t-1) \cdot B (w,w ) - 2\; B (\frac{u-v}{2}, w) + B (\frac{u-v}{2}, \frac{u-v}{2}).
\end{array}
$$
Completing the square gives
$$\begin{array}{c}
\displaystyle B(w,w) +\frac{2}{t-1} B (\frac{u-v}{2}, w) =\\
\\
\displaystyle B (w+ \frac{u-v}{2(t-1)},  w+ \frac{u-v}{2(t-1)}) - \frac{1}{(t-1)^2} B (\frac{u-v}{2}, \frac{u-v}{2}).
\end{array}
$$
Replacing variables $x= w+ \frac{1}{2(t-1)} (u-v)$ gives
$$\begin{array}{c}
\displaystyle -t B (w,w) +  B (\frac{u-v}{2}-w, \frac{u-v}{2}-w) + \frac{1}{2} B(u,v)=\\
\\
\displaystyle -(t-1)B (x,x) + (1 + \frac{t-1}{(t-1)^2}) B (\frac{u-v}{2}, \frac{u-v}{2}) + \frac{1}{2} B (u,v)
=\\
\\
\displaystyle -(t-1)B (x,x) + \frac{t}{t-1} B (\frac{u-v}{2}, \frac{u-v}{2}) + \frac{1}{2} B (u,v)
\end{array}$$

Thus, $\gamma_{2/t} \star \beta$ is the factor $ \frac{1}{(-q)^{n/2} K(-t/4) K (1)}$ times
$$\begin{array}{c}
\displaystyle
\resizebox{0.9\textwidth}{!}{$\displaystyle\sum_{u,v, x \in W} \psi (-(t-1) B(x,x) + \frac{t}{t-1} B (\frac{u-v}{2}, \frac{u-v}{2}) + \frac{1}{2}B(u,v))
\cdot (u,v) =$}\\
\\
\displaystyle
K(-(t-1)) \cdot \sum_{u,v \in W} \psi (\frac{t}{t-1} B (\frac{u-v}{2}, \frac{u-v}{2}) + \frac{1}{2}B(u,v))
\cdot (u,v).
\end{array}
$$

This agrees with our above calculation of $g_t \star \alpha_t$ in \rref{galphafinal}, up to a constant.
It remains to check that the constants precisely agree, i.e.
\beg{FinalConstantsAppAgree}{\frac{K (-(t-1))}{(-q)^{n/2} K(-t/4) K (1)} = \frac{K (-t/ (t-1))}{q^n(-q)^{n/2}}.}
Recalling \rref{Konstant}, first note that since 
$$K(c) = (-1)^{n (\ell+1)} \cdot \text{disc}(B) \cdot q^{n/2} \cdot \epsilon_q (c)^n \cdot \epsilon_q (-1)^{n/2}$$
only depends on $\epsilon_q (c)$, we have
$K (-t/4) = K(-t)$.
We can therefore simplify \rref{FinalConstantsAppAgree} to
$$ q^{n} \cdot K (-(t-1)) = K (-t/ (t-1)) \cdot K (-t) \cdot K (1).$$
Next, the signs, i.e. the factors $(-1)^{n(\ell+1)} \text{disc}(B)$ in each $K$ factor
will cancel, since both the left and right hand side have and odd number of $K$ factors.
Further, collecting factors, both sides have a factor of $q^n(-q)^{n/2}$, which we may factor out.
This reduces the claim to
$$\epsilon_q( - (t-1))^{n} \cdot \epsilon (-1)^{n/2} = \epsilon_q (-t/(t-1))^n \epsilon_q (-t)^n \epsilon_q (-1)^{3n/2}.$$
Dividing both sides by $\epsilon_q (-1)^{n/2}$ and collecting terms gives
$$ \epsilon_{q} (-(t-1))^n = \epsilon_q (\frac{-t}{t-1} \cdot (-t) \cdot (-1))^n,$$
which holds, since $\epsilon_q (-(t-1)) = \epsilon_q(-1/(t-1))$.

\vspace{7mm}


\begin{thebibliography}{99}
\bibitem{AdamsMoy}
J. Adams, A. Moy. Unipotent representations and reductive dual pairs over finite fields.
{\em Trans. Amer. Math. Soc.}, 340 (1993), 309–321

\vspace{5mm}



\bibitem{AubertKP}
A.-M. Aubert, W. Kra\'{s}kiewicz, T. Przebinda. Howe correspondence and Springer correspondence
for dual pairs over a finite field, In: Lie Algebras, Lie Superalgebras, Vertex
Algebras and Related Topics, {\em Proc. Sympos. Pure Math.}, 92, Amer. Math. Soc., Providence,
RI, 2016, pp. 17-44.

\vspace{5mm}

\bibitem{AubertMichelRouquier} A.-M. Aubert, J. Michel, R. Rouquier. 
Correspondance de Howe pour les groupes réductifs
sur les corps finis. {\em Duke Math. J.}, 83 (1996), 353-397.

\vspace{5mm}

\bibitem{DeligneSymetrique}
P. Deligne. La cat\'{e}gorie des repr\'{e}sentations du groupe sym\'{e}trique $S_t$, lorsque $t$ n'est pas un entier naturel. {\em Algebraic groups and homogeneous spaces}, 209-273, Tata Inst. Fund. Res. Stud. Math., 19, Tata Inst. Fund. Res., Mumbai, 2007.

\vspace{5mm}

\bibitem{DeligneTensor} P. Deligne. Cat\'{e}gories Tensorielles.
{\em Mosc. Math. J}. 2 2 (2002) pp. 227-248.

\vspace{5mm}

\bibitem{DeligneMilne}
P. Deligne, J. Milne: {\em Cat\'{e}gories Tannakiennes}, Grothendieck Festschrift, vol. II,
Birkh\"{a}user Progress in Math. 87, 1990, pp. 111-195.


\vspace{5mm}


\bibitem{Chavez} J. Epequin Chavez.
Extremal unipotent representations for the finite Howe correspondence,
{\em J. Algebra} 535 (2019), 480-502.

\vspace{5mm}


\bibitem{EGNOBook} 
P. Etingof, S. Gelaki, D. Nikshych, V. Ostrik: {\em Tensor categories}.
Math. Surveys Monogr., 205
American Mathematical Society, Providence, RI, 2015, xvi+343 pp.

\vspace{5mm}

\bibitem{GanTakeda}
W.T. Gan, S. Takeda. A proof of the Howe duality conjecture. {\em J. Amer. Math. Soc.}, 29
(2016), 473-493.

\vspace{5mm}

\bibitem{GelbartDuals} S.S. Gelbart. Examples of dual reductive pairs, In: {\em Automorphic Forms, Representations
and L-functions}, Oregon State Univ., Corvallis, OR, 1977, {\em Proc. Sympos. Pure Math.}, 33,
Amer. Math. Soc., Providence, RI, 1979, pp. 287-296.

\vspace{5mm}

\bibitem{Gerardin} P. G\'{e}rardin. Weil representations associated to finite fields.
{\em J. Algebra}, 46 (1977), 54-101.

\vspace{5mm}

\bibitem{PiGuralLarsTiep} R. M. Guralnick, M. Larsen, P. H. Tiep.
Character levels and character bounds
{\em Forum Math. Pi}, 8 (2020), e2, 81 pp.

\vspace{5mm}


\bibitem{HoweGurevich} S. Gurevich, R. Howe. Rank and duality in representation theory,
{\em Jpn. J. Math.} 15 (2020), 223-309.

\vspace{5mm}

\bibitem{HoweGurevichBook} S. Gurevich, R. Howe. Small representations of finite classical groups,
{\em Progr. Math.}, 323
{\em Birkhäuser/Springer}, Cham, 2017, 209-234.



\vspace{5mm}

\bibitem{HarmanSnowden} N. Harman, A. Snowden. Oligomorphic groups and tensor categories. arXiv:2204.04526, 2022.

\vspace{5mm}

\bibitem{HoweFiniteFields} R. Howe. Invariant Theory and Duality for Classical Groups over Finite Fields,
with Applications to their Singular Representation Theory, preprint, Yale University.

\vspace{5mm}

\bibitem{Howe} R. Howe. On the character of Weil's representation, {\em Trans. Amer.
Math. S.} 177 (1973), 287-298.



\vspace{5mm}

\bibitem{HoweKobayashi} R. Howe. The oscillator semigroup over finite fields, to appear in
{\em Symmetry in Geometry and Analysis, Volume 1: Festschrift in Honor of Toshiyuki Kobayashi,} 2025.

\vspace{5mm}


\bibitem{HoweTheta} R. Howe, $\theta$-series and invariant theory, In: 
{\em Automorphic Forms, Representations and L-Functions},
Oregon State Univ., Corvallis, OR, 1977, {\em Proc. Sympos. Pure Math.}, 33, Amer.
Math. Soc., Providence, RI, 1979, pp. 275-285.


\vspace{5mm}

\bibitem{Katz} N. M. Katz.
Larsen's alternative, moments, and the monodromy of Lefschetz pencils.
{\em Contributions to automorphic forms, geometry, and number theory}, 521-560.
Johns Hopkins University Press, Baltimore, MD, 2004.

\vspace{5mm}

\bibitem{KatzTiepMoments} N. M. Katz, P. H. Tiep. Moments, exponential sums, and
monodromy groups. Available at https://web.math.princeton.edu/$\sim$nmk/kt24-70.pdf

\vspace{5mm}


\bibitem{KatzTiepConjecture} N. M. Katz, P. H. Tiep. On a Conjecture of Miyamoto, preprint.



\vspace{5mm}


\bibitem{Knop1} F. Knop. A construction of semisimple tensor categories. {\em C. R. Math. Acad. Sci. Paris C}. 343, 2006.

\vspace{5mm}


\bibitem{Knop2} F. Knop. Tensor Envelopes of Regular Categories. {\em Adv. Math.} 214, 2007.

\vspace{5mm}

\bibitem{VectorDelannoy} S. Kriz. Quantum Delannoy Categories, 2023. Available
at https://krizsophie.github.io

\vspace{5mm}

\bibitem{TAlgebras} S. Kriz. Arbitrarily High Growth in Quasi-Pre-Tannakian Categories, 2023.
Available at https://krizsophie.github.io



\vspace{5mm}

\bibitem{OscillatorRepsFull} S. Kriz. Oscillator representations and semisimple pre-Tannakian categories, 2024.
Available at https://krizsophie.github.io

\vspace{5mm}

\bibitem{KrizLusztigStablSymp} S. Kriz. Howe duality over finite fields II: 
explicit stable computation, 2025. Available at https://arxiv.org/abs/2506.22983




\vspace{5mm}


\bibitem{KrizLusztigExtended} S. Kriz. Howe duality over finite fileds III:
full computation and the Gurevich-Howe conjectures, 2025. Available at https://arxiv.org/abs/2506.22986



\vspace{5mm}


\bibitem{Kudla} S. S. Kudla. On the local theta-correspondence.
{\em Invent. Math.}, 83 (1986), 229-255.


\vspace{5mm}

\bibitem{KudlaMillson} S. S. Kudla, J.J. Millson. The theta correspondence and harmonic forms. I, {\em Math. Ann.}, 274 (1986), 353-378.


\vspace{5mm}

\bibitem{LarsenTiep} M. Larsen, P.H. Tiep. Uniform character bounds for finite classical groups,
{\em Ann. of Math.} (2) 200 (2024), no. 1, 1–70.

\vspace{5mm}

\bibitem{LiuWang} D. Liu, Z. Wang.
Remarks on the theta correspondence over finite fields. {\em Pacific J. Math.} 306 (2020),
587-609.


\vspace{5mm}

\bibitem{Lusztig} G. Lusztig: {\em Characters of Reductive Groups over a Finite Field}.
Ann. of Math. Stud., 107
{\em Princeton University Press}, Princeton, NJ, 1984, xxi+384 pp.


\vspace{5mm}

\bibitem{QuantumCodes} F. Montealegre-Mora, D. Gross. Rank-deficient representations in the theta correspondence over finite fields arise from quantum codes. {\em Rep. Theory}, 25 (2021), 193-223.

\vspace{5mm}

\bibitem{Moeglin} C. Moeglin, M.-F. Vignéras, J.-L. Waldspurger: {\em 
Correspondances de Howe sur un corps
p-adique}. Lecture Notes in Math., 1291, Springer-Verlag, 1987.

\vspace{5mm}

\bibitem{Pan1} S.-Y. Pan. Howe correspondence of unipotent characters for a finite symplectic/even-orthogonal dual pair. {\em Am. J. Math.}, John Hopkins Univ. Press, 146 (2024), 813-869.


\vspace{5mm}

\bibitem{Pan2} S.-Y. Pan. Lusztig correspondence and Howe correspondence for finite reductive dual pairs. {\em Math. Ann.} 390 (2024), 4657-4699.

\vspace{5mm}

\bibitem{Prasad} D. Prasad, Weil representation, Howe duality, and the theta correspondence,
In: {\em Theta Functions:
From the Classical to the Modern, CRM Proc. Lecture Notes}, 1, Amer. Math. Soc.,
Providence, RI, 1993, pp. 105-127.

\vspace{5mm}


\bibitem{Srinivasan}
B. Srinivasan. Weil representations of classical groups. {\em Invent. Math.}, 51 (1979), 143-153.

\vspace{5mm}

\bibitem{Waldspurger} J.-L. Waldspurger:
D\'{e}monstration d'une conjecture de dualit\'{e} de Howe dans le cas p-adique, $p\neq 2$. {\em
Festschrift in Honor of I. I. Piatetski-Shapiro on the Occasion of His Sixtieth Birthday, Part I, Israel Math. Conf. Proc.}, 2 (1990), 267-324
\vspace{5mm}


\bibitem{WeilOriginalReference} A. Weil. 
Sur certains groupes d'operateurs unitaires, {\em Acta Math.} 111 (1964), 143-211.



\end{thebibliography}
\end{document}